\newtheorem{theorem}{Theorem}[section]
\newtheorem{prop}[theorem]{Proposition}
\newtheorem{lemma}[theorem]{Lemma}
\newtheorem{corollary}[theorem]{Corollary}
\theoremstyle{definition}
\newtheorem{example}[theorem]{Example}
\theoremstyle{remark}
\newtheorem{remark}[theorem]{\bf Remark}
 \numberwithin{equation}{section}
\newcommand{\si}{\sigma}
\newcommand{\de}{\delta}
\newcommand{\Tn}{R[x;\tau ,\delta]}
\newcommand{\A}{R[x;\sigma]}
\newcommand{\C}{R[x, x^{-1};\sigma]}
\begin{document}
\title{ \bf\normalsize QUASI-DUO SKEW POLYNOMIAL RINGS}
 \author{ {\bf\normalsize  Andr\'{e} Leroy$^\ddag$,   Jerzy
Matczuk$^*$,  Edmund R. Puczy{\l}owski\footnote{ The second and third
authors were  supported
 by Polish KBN grant No. 1 P03A 032 27}} \\
\\   $^\ddag$ \normalsize Universit\'{e} d'Artois,  Facult\'{e} Jean Perrin\\
\normalsize Rue Jean Souvraz  62 307 Lens, France\\
   \normalsize  e.mail: leroy@poincare.univ-artois.fr\\
 \\ $^*$ \normalsize Institute of Mathematics, Warsaw University,\\
 \normalsize Banacha 2, 02-097 Warsaw, Poland\\
 \normalsize e.mail: jmatczuk@mimuw.edu.pl, edmundp@mimuw.edu.pl}
\date{ }
\maketitle\markboth{ \bf A.Leroy,  J.Matczuk, E.Puczy{\l}owski}
{\bf Quasi-duo rings}

\begin{abstract}
A characterization of right (left) quasi-duo skew polynomial rings
of endomorphism type and   skew Laurent polynomial rings are
given. In particular, it is shown that (1)   the polynomial ring
$R[x]$ is right quasi-duo iff $R[x]$ is commutative modulo its
Jacobson radical iff $R[x]$ is left quasi-duo, (2) the  skew
Laurent polynomial ring is right quasi-duo iff it is left
quasi-duo. These extend some known results concerning a
description of quasi-duo polynomial rings and give a partial
answer to the question posed by Lam and Dugas whether right
quasi-duo rings are left quasi-duo.
\end{abstract}

\section*{\normalsize INTRODUCTION}

An associative ring $R$ with unity is called \it right (left)
quasi-duo \rm if every maximal right (left) ideal of $R$ is
two-sided or, equivalently, every right (left) primitive
homomorphic image of $R$ is a division ring. There are  many open
problems in the area (Cf. \cite{LD}). One of the most interesting
is Problem 7.7\cite{LD}:  ``does there exist a right quasi-duo
ring that is not left quasi-duo ? ". If it were so, there would
exist right quasi-duo non-division rings, which are left
primitive. Constructing such an example might be a quite
challenging task as it is even not easy to construct left but not
right primitive rings. The known examples of such sort are based
on skew polynomial rings of endomorphism type. In that context it
is interesting to determine when such skew polynomial rings are
right (left) quasi-duo. This problem is not new and was studied by
several authors. In \cite{Y} it was proved that if $R$ is Jacobson
semisimple and the polynomial ring $R[x]$ is right quasi-duo, then
$R$ is commutative and it was expected that the same holds when
$R[x]$ is Jacobson semisimple.

 In this article  we obtain a complete
description of right (left) quasi-duo skew polynomial rings of
endomorphism type (Cf. Theorem \ref{main skew poly}) and skew
Laurent polynomial rings (Cf. Theorem \ref{Laurent q-duo}). The
descriptions  are left-right symmetric but do  not imply that
$R[x;\tau]$ is right quasi-duo iff it is left quasi-duo in
general. However the descriptions  do  imply that in order to get
a skew polynomial ring which is quasi-duo  on one side only,  one
has to begin with a coefficient ring which already has this
property. Thus skew polynomial rings of endomorphism type
themselves  do not help to construct an example of a ring which is
right but not left quasi-duo.

Theorem \ref{main skew poly} implies that the ordinary polynomial
ring $R[x]$ is right (left) quasi-duo if and only if it is
commutative modulo its Jacobson radical. This in particular shows
that $R[x]$ is right quasi-duo if and only if it is left
quasi-duo. The same result
 holds also for skew Laurent polynomial
rings.

The  article is organized as follows. In the first section we show
that the class of right quasi-duo
 rings is hereditary on certain subrings and closed under passing to  some overings.
  The most important for
 our further
studies are the results which say that the quasi-duo property is
preserved when passing from Laurent skew polynomial rings to skew
polynomial rings and from a ring with an injective endomorphism to
its Cohn-Jordan  extension.

In Section 2 we  obtain, based on a result of Bedi and Ram
\cite{BR},  a detailed description of the Jacobson radical of
right (left) quasi-duo skew polynomial rings of automorphism type,
which will be needed later on.

 The  characterization of  right (left) quasi-duo skew polynomial
rings of endomorphism type is given in  Section 3.

The last section  contains applications and examples. In
particular a characterization of quasi-duo skew Laurent polynomial
rings and   several examples delimiting the obtained results are
presented.

All rings considered in this article are associative with unity.

We frequently    consider  the endomorphism and automorphism cases
separately. Thus  we will use different letters for denoting those
maps, namely $\si$ and $\tau$ will stand for an automorphism and
an endomorphism of a ring $R$, respectively.

We will say that a subset $B$ of $R$ is \it $\tau$-stable \rm if
$\tau(B)\subseteq B$ and  $\tau^{-1}(B)\subseteq B$, where
$\tau^{-1}(B)$ denotes the preimage of $B$ in $R$. Notice that if
$I$ is an ideal of $R$ such that $\tau(I)\subseteq I$, then $\tau$
induces an endomorphism of the factor ring $R/I$ and $I$ is
$\tau$-stable iff the  induced endomorphism is injective. When
$\tau=\si$ is an automorphism of $R$, then $I$ is $\si$-stable if
$\si(I)=I$.

The skew polynomial ring  and skew Laurent polynomial ring    are
denoted by $R[x;\tau]$ and $R[x,x^{-1};\si]$, respectively. The
coefficients from $R$ are written on the left of the indeterminate
$x$.

The Jacobson radical of a ring $R$ will be denoted by $J(R)$. It
is clear from the definition, that a ring $R$ is right (left)
quasi-duo if and only if $R/J(R)$ is right (left) quasi-duo and
that $R/J(R)$ is a reduced ring in case it is right (left)
quasi-duo.  We will use these properties many times in the
article.

\section{\normalsize QUASI-DUO PROPERTY OF RINGS AND THEIR SUBRINGS}

 We start  with the following simple observation.
 \begin{lemma}\label{lemma subrings} If $R$ is a unital subring of a right quasi-duo ring
 $T$, then
every  maximal right ideal $I$ of $R$ such that  $IT\ne T$ is
two-sided.
  \end{lemma}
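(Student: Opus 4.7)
The plan is to extend $I$ to a suitable maximal right ideal of $T$ and pull it back to $R$.

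Let $I$ be a maximal right ideal of $R$ with $IT\ne T$. First I would observe that $IT$ is a proper right ideal of $T$, so by Zorn's lemma it is contained in some maximal right ideal $M$ of $T$. The right quasi-duo hypothesis on $T$ then gives that $M$ is two-sided in $T$.

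Next I would consider the contraction $M\cap R$. Since $M$ is a two-sided ideal of $T$, the intersection $M\cap R$ is a two-sided ideal of the subring $R$. It is a proper ideal because $1\in R$ but $1\notin M$, and it contains $I$ because $I\subseteq IT\subseteq M$ while $I\subseteq R$. Thus $I\subseteq M\cap R\subsetneq R$ is a chain of right ideals of $R$.

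Finally, the maximality of $I$ as a right ideal of $R$ forces $I=M\cap R$, and therefore $I$ inherits the two-sided property from $M\cap R$.

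There is no real obstacle here; the only point worth highlighting is the need for the hypothesis $IT\ne T$, which is exactly what guarantees that $IT$ lies inside some maximal right ideal of $T$ so that the quasi-duo hypothesis can be applied.
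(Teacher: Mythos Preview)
Your proof is correct and follows essentially the same approach as the paper: extend $I$ to a maximal right ideal $M$ of $T$, use the quasi-duo hypothesis to make $M$ two-sided, and contract back to $R$ using maximality of $I$. The paper's version is only slightly terser, omitting the explicit mention of Zorn's lemma and the check that $M\cap R$ is proper.
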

\begin{proof} If  $T\ne IT$, then  $IT$ is contained in a
 maximal right ideal $M$ of $T$ which is a two-sided ideal, as $T$ is a right quasi-duo ring.
Clearly $I\subseteq M\cap R$ and maximality of $I$ yields that
$I=M\cap R$, as otherwise $M\cap R=R$ and $M=T$ would follow.
\end{proof}

 A subring $S$ of a ring $T$ is called \cite{L} a  {\it corner
subring} of  $T$ if $S$ is a ring with a unity, possibly different
from that of $T$, and if there exist an additive subgroup $C$ of
$T$ such that $T=S\oplus C$ and $SC, CS\subseteq C$. The subgroup
$C$ is called a {\it complement} of $S$. We  say that $S$ is a
{\it left corner} of $T$ if the complement $C$  satisfies
$SC\subseteq C$ only.

 The most
classical example of a corner subring is a subring $S$ of the form
$eTe$, where $e\in T$ is an idempotent. Another example is the
following. Let $H$ be a monoid with an identity $e$. If
$T=\bigoplus_{g\in H} R_g $ is a $H$-graded ring, then $R_e$ is a
corner of $T$.

A natural example of a left corner is the base ring $R$ of a skew
polynomial ring $R[x;\tau,\de]$, where $\tau$ and $\de$ stand for
an endomorphism   and  a $\tau$-derivation of $R$, respectively.
Clearly $R[x;\tau,\de]=R\oplus C$, where $C=R[x;\tau,\de]x$ and
$CR\not \subseteq C$ if $\de \ne 0$.

\begin{theorem}\label{th. corners}
 Let $S$ be a left corner   of a right quasi-duo ring $T$. Then
 $S$ is also  right quasi-duo.
\end{theorem}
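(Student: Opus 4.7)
Fix a maximal right ideal $I$ of $S$. The plan is to produce a maximal right ideal $M$ of $T$ with $I\subseteq M$ and $1_S\notin M$: once such $M$ is at hand, the right quasi-duo hypothesis on $T$ makes $M$ two-sided, the maximality of $I$ together with $1_S\notin M$ forces $M\cap S=I$, and then
\[SI\subseteq SM\cap S\subseteq M\cap S=I\]
shows that $I$ is two-sided in $S$.

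First I would form the right ideal $IT$ of $T$ and observe that $IT=IS+IC\subseteq I+C$, using $IS\subseteq I$ and $IC\subseteq SC\subseteq C$. Consequently $IT\cap S\subseteq(I+C)\cap S=I$, and in particular $1_S\notin IT$; moreover $IT\subseteq 1_ST$, since $I\subseteq S=1_SS$. If it happened that $1_S=1_T$, Lemma~\ref{lemma subrings} applied to $I$ would already close the argument. The real obstacle is the case $1_S\ne 1_T$, in which a maximal right ideal of $T$ extending $IT$ may also contain $1_S$, so that $M\cap S=S$ leaves no useful trace of $I$ in $T$.

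To circumvent this I would work not with $T/IT$ but with the right $T$-module $N:=1_ST/IT$. This module is nonzero (since $1_S\notin IT$) and cyclic, generated by $w:=\overline{1_S}$, so a standard Zorn argument supplies a simple right $T$-module quotient $Q$ of $N$; write $w'$ for the image of $w$, which is nonzero because $w$ generates $N$. Two direct computations are then decisive: $w\cdot i=\overline{i}=0$ for every $i\in I$ (since $i\in IT$), and $w\cdot 1_S=w$ (by idempotency of $1_S$). Taking $M:=\{t\in T:w't=0\}$, the simplicity of $Q$ makes $M$ a maximal right ideal of $T$, and these two computations yield $I\subseteq M$ and $1_S\notin M$; the scheme of the first paragraph then completes the proof.
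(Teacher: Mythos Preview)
Your argument is correct. The key computation $IT\subseteq I\oplus C$ (hence $IT\cap S\subseteq I$ and $1_S\notin IT$) uses only the left-corner condition $SC\subseteq C$, and the passage from the cyclic module $1_ST/IT$ to a simple quotient is standard since the module is nonzero and generated by a single element; the annihilator of the image of $1_S$ then does exactly what you need.

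Your route is genuinely different from the paper's. The paper does not work in $S$ directly: it enlarges $S$ to the unital subring $R=S+(1_T-1_S)\mathbb{Z}$ of $T$, which now shares the unity of $T$, and shows $R$ is right quasi-duo (whence $S$, as a factor of $R$, is too). For a maximal right ideal $I$ of $R$ one has the dichotomy $IT\ne T$ versus $IT=T$; the first case is Lemma~\ref{lemma subrings} verbatim, and in the second a short computation with the decomposition $T=S\oplus C$ forces $1_S\in I$, after which $R/I$ is a quotient of the commutative ring $R/1_SR\cong\mathbb{Z}/m\mathbb{Z}$ and $I$ is automatically two-sided. Your approach trades this auxiliary ring and case split for a single module-theoretic step: instead of asking whether $IT$ is all of $T$, you look only at the cyclic submodule $1_ST$ and pick a simple quotient there, which forces the resulting maximal right ideal to avoid $1_S$. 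The paper's method is slightly more elementary (no Zorn beyond what is implicit in the existence of maximal right ideals in a unital ring) and reuses Lemma~\ref{lemma subrings}; yours is cleaner in that it handles the possibly distinct unities uniformly and never leaves $S$ for an intermediate ring.
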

\begin{proof}
 Let us fix a complement $C$ of $S$ in $T$ and
  define $R=S+(1-e)\mathbb{Z}\subseteq T$, where     $e$ stands for the unity of $S$.
Since $S$ is a homomorphic image of $R$, it is enough to show that
$R$ is right quasi-duo. Let $I$ be a maximal right ideal of $R$.
If $IT\ne T$, then, by Lemma \ref{lemma subrings}, $I$ is a
two-sided ideal of $R$ . Suppose $IT=T$. Notice that $e$ is a
central idempotent of $R$,  $eR=S$ and $SC\subseteq C$.
 Using the above  we obtain $e\in eIT\cap S=(Ie(S\oplus
C))\cap S\subseteq ((I\cap S)\oplus SC)\cap S\subseteq I$, i.e.
$e\in I$. This yields that  $I$ is a two-sided ideal of $R$, as it
is the preimage of an ideal of the commutative ring $R/eR$.
\end{proof}

In the case $S$ is a corner of $T$, the above proposition was
proved by Lam and Dugas (\cite{LD}, Theorem 3.8) by completely
different arguments. From Theorem \ref{th. corners} one
immediately obtains the following corollary. The first statement
 was proved, using long arguments, in \cite{HJK}.

\begin{corollary}\label{Laurent poly}
\begin{enumerate}

  \item If $R$ is a right quasi-duo ring, then $eRe$ is also a right quasi-duo
ring, for every nonzero idempotent $e$ of $R$.
\item Suppose $H$ is
a monoid with an identity $e$. If $T=\bigoplus_{g\in H} R_g $ is a
  $H$-graded ring which is right quasi-duo, then $R=R_e$
  is a right quasi-duo ring.
  \item  Let $\tau$ be an endomorphism and $\delta$   a $\tau$-derivation of a ring $R$. If the
  skew polynomial ring
  $\Tn$ is right quasi-duo,
then $R$ is right quasi-duo.

  \end{enumerate}
\end{corollary}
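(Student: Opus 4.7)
The plan is to derive all three statements as direct applications of Theorem \ref{th. corners}, by exhibiting in each case the target ring as a left corner of the ambient right quasi-duo ring. Thus the work lies entirely in writing down the appropriate corner decompositions; once this is done, Theorem \ref{th. corners} closes each argument.

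For part (1), I would use the Peirce decomposition relative to $e$: set $S=eRe$ and $C=eR(1-e)+(1-e)Re+(1-e)R(1-e)$, so that $R=S\oplus C$ as additive groups. A direct block multiplication check shows $SC\subseteq C$ and $CS\subseteq C$ (the two off-block products $eRe\cdot eR(1-e)$ and $(1-e)Re\cdot eRe$ remain in $C$, while all other products involving $e$ and $1-e$ vanish), so $eRe$ is in fact a corner of $R$, hence a left corner, and Theorem \ref{th. corners} gives the conclusion. For part (2), I would take $S=R_e$ and $C=\bigoplus_{g\ne e}R_g$; the hypothesis that $e$ is the identity of the monoid $H$ forces $R_eR_g\subseteq R_g$ and $R_gR_e\subseteq R_g$ for every $g\in H$, so again $R_e$ is a corner of $T$ and Theorem \ref{th. corners} applies.

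Part (3) is the statement that genuinely uses the strength of Theorem \ref{th. corners}, and this is where I would expect the only real subtlety. The paper has already remarked that $\Tn=R\oplus C$ with $C=\Tn x$ and that $RC\subseteq C$, so $R$ is a left corner of $\Tn$. The point to stress is that $R$ need not be a corner in the stronger sense, because the relation $xr=\tau(r)x+\delta(r)$ forces $CR\not\subseteq C$ whenever $\delta\ne 0$. This is precisely the reason the earlier result of Lam and Dugas for two-sided corners is insufficient in the presence of a nonzero derivation, and why Theorem \ref{th. corners} was formulated for one-sided corners. Once this observation is in place, Theorem \ref{th. corners} immediately yields that $R$ is right quasi-duo.

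In short, the main obstacle is conceptual rather than computational: recognising that the skew polynomial case forces us beyond the classical corner framework into the left-corner framework covered by Theorem \ref{th. corners}. After that, each of the three parts is a one-line verification of the defining identities $SC\subseteq C$ (and, where applicable, $CS\subseteq C$).
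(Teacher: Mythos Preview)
Your proposal is correct and follows exactly the paper's approach: the paper explicitly sets up $eRe$, the $e$-component $R_e$ of a monoid-graded ring, and the base ring of $R[x;\tau,\delta]$ as examples of (left) corners immediately before Theorem~\ref{th. corners}, and then states that the corollary is obtained ``immediately'' from that theorem. You have simply spelled out the verifications the paper leaves implicit, including the relevant observation that part~(3) genuinely requires the one-sided notion of corner when $\delta\neq 0$.
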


Let us observe that for every positive integer $n$, the rings
$S=R[x;\tau]$ and $T=R[x,x^{-1};\si]$ are naturally  graded by the
cyclic group $G$ of order $n$ with  the $e$-components equal to
$S_0=R[x^n]\subseteq S$ and $T_0=R[x^n, x^{-n}]\subseteq T$ which
are isomorphic to $R[x;\tau^n]$ and $R[x,x^{-1},\si^n]$,
respectively. Hence Corollary \ref{Laurent poly} implies the
following:

\begin{corollary}

\begin{enumerate}\label{laurent poly 2}
\item If $R[x;\tau]$  is right
quasi-duo, then $R$ and  $R[x;\tau^n]$ are right
 quasi-duo, for every $n\geq 1$.
 \item If $\C$ is right quasi-duo, then $R$ and $R[x, x^{-1};\si^n]$ are
 right quasi-duo, for every $n\geq 1$.
\end{enumerate}

\end{corollary}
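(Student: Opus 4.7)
The plan is to reduce both parts of the corollary to Corollary \ref{Laurent poly} by exhibiting appropriate gradings of $R[x;\tau]$ and $R[x,x^{-1};\si]$, exactly as in the observation preceding the statement. The key point is the decomposition
\[
R[x;\tau]=\bigoplus_{i=0}^{n-1} R[x^n]\,x^i,\qquad R[x,x^{-1};\si]=\bigoplus_{i=0}^{n-1} R[x^n,x^{-n}]\,x^i,
\]
which realizes each ring as a $\mathbb{Z}/n\mathbb{Z}$-graded ring whose identity component is $R[x^n]$ (respectively $R[x^n,x^{-n}]$). One first checks that these identity components are isomorphic as rings to $R[x;\tau^n]$ and $R[x,x^{-1};\si^n]$ via the substitution $x^n\mapsto x$; this is the only bookkeeping step and amounts to verifying that conjugation by $x^n$ on $R$ produces $\tau^n$ (respectively $\si^n$).

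For part (1), assuming $R[x;\tau]$ is right quasi-duo, Corollary \ref{Laurent poly}(3) (applied with $\delta=0$) immediately yields that $R$ is right quasi-duo. Applying Corollary \ref{Laurent poly}(2) to the $\mathbb{Z}/n\mathbb{Z}$-grading displayed above gives that $R[x^n]$ is right quasi-duo, and the isomorphism $R[x^n]\cong R[x;\tau^n]$ completes the proof.

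For part (2), assuming $\C$ is right quasi-duo, I would first view $\C$ as $\mathbb{Z}$-graded with identity component $R$ and invoke Corollary \ref{Laurent poly}(2) (with $H=\mathbb{Z}$) to conclude $R$ is right quasi-duo. Then applying Corollary \ref{Laurent poly}(2) to the $\mathbb{Z}/n\mathbb{Z}$-grading above gives that $R[x^n,x^{-n}]$ is right quasi-duo, and the isomorphism with $R[x,x^{-1};\si^n]$ concludes the argument.

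There is essentially no obstacle, since the heavy lifting has already been done in Theorem \ref{th. corners} and Corollary \ref{Laurent poly}; the only thing to be careful about is the correct identification of the identity component with the twisted polynomial ring over $\tau^n$ (or $\si^n$), so that the correct power of the twisting map appears in the conclusion.
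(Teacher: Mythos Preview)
Your proposal is correct and follows essentially the same route as the paper: the observation preceding the corollary records precisely the $\mathbb{Z}/n\mathbb{Z}$-gradings you describe, identifies the identity components with $R[x;\tau^n]$ and $R[x,x^{-1};\si^n]$, and then invokes Corollary~\ref{Laurent poly}. Your treatment is slightly more explicit in separating out how $R$ itself is shown to be right quasi-duo (via Corollary~\ref{Laurent poly}(3) with $\delta=0$, respectively via the $\mathbb{Z}$-grading), but the argument is the same.
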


If $R$ is an Ore domain with a division ring of factions $Q$, then
$Q$ is a quasi-duo ring though  $R$ need not be quasi-duo.
 The
following theorem gives necessary and sufficient conditions for a
ring to be right quasi-duo in terms of its localization with
respect to a multiplicatively closed set generated by regular
right normalizing elements. Recall that such sets are always right
denominator sets and that an element $s\in R$ is \it right
normalizing \rm if $ Rs\subseteq sR$, i.e. the right ideal $sR$ is
two-sided.

\begin{theorem}\label{localization}
 Let $S$ be the  multiplicatively closed set generated by the set $X\subseteq
 R$ of regular right normalizing elements of $R$ such that the quotient ring $RS^{-1}$ is   right
 quasi-duo. Then the following conditions are equivalent:
\begin{enumerate}
  \item $R$ is a right quasi-duo ring;
  \item $R/xR$ is a right
 quasi-duo ring, for every  $x\in X$.
\end{enumerate}
\end{theorem}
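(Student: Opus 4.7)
The plan is to handle the two implications separately, with essentially all the content concentrated in $(2) \Rightarrow (1)$.

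For $(1) \Rightarrow (2)$, the argument is routine. Each $x \in X$ is right normalizing, so $Rx \subseteq xR$ and $xR$ is a two-sided ideal of $R$; thus $R/xR$ is a well-defined factor ring. Since preimages of maximal right ideals of $R/xR$ under the projection are maximal right ideals of $R$, any factor of a right quasi-duo ring is right quasi-duo, giving (2).

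For $(2) \Rightarrow (1)$, fix a maximal right ideal $I$ of $R$ and split according to whether $I$ meets $S$. If $I \cap S = \emptyset$, then $IS^{-1}$ is a proper right ideal of $RS^{-1}$, since otherwise $1 = is^{-1}$ with $i \in I$, $s \in S$ would give $s = i \in I \cap S$. Extend it to a maximal right ideal $M$ of $RS^{-1}$; by hypothesis $M$ is two-sided, so $M \cap R$ is a proper two-sided ideal of $R$ containing $I$, and maximality of $I$ forces $I = M \cap R$. In particular $I$ is two-sided. The remaining case $I \cap S \neq \emptyset$ reduces to an application of (2) via the following key observation: $I$ must then contain an element of $X$ itself. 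Writing $s = x_1 \cdots x_k \in I$ with $x_i \in X$ and inducting on $k$, the essential step is to show for $s = x_1 y$ with $y = x_2 \cdots x_k$ (still right normalizing, since products of right normalizing elements are right normalizing) that either $x_1 \in I$ or $y \in I$. For this, set $I' = \{r \in R : ry \in I\}$. The property $Ry \subseteq yR$ supplies, for each $t \in R$, some $t' \in R$ with $ty = yt'$; hence for $r \in I'$ and $t \in R$ one has $(rt)y = r(yt') = (ry)t' \in I$, which together with closure under addition makes $I'$ a right ideal of $R$. Clearly $I \subseteq I'$ and $x_1 \in I'$, so if $x_1 \notin I$ then maximality forces $I' = R$ and hence $y = 1 \cdot y \in I$; induction on $k$ then produces the desired $x \in X \cap I$. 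Once such $x$ is in hand, the two-sided ideal $xR \subseteq I$ allows passage to $R/xR$, which is right quasi-duo by (2); then $I/xR$ is two-sided in $R/xR$, and $I$ is two-sided in $R$.

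The main obstacle is the key observation that $I \cap S \neq \emptyset$ implies $I \cap X \neq \emptyset$. This is the step at which the right normalizing hypothesis on the generators of $S$ is genuinely used: without the identity $ty = yt'$, the set $I'$ above need not be a right ideal and the reduction from $I \cap S$ to $I \cap X$ would break down.
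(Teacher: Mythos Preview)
Your proof is correct and follows the same overall architecture as the paper: the implication $(1)\Rightarrow(2)$ is disposed of via closure of the right quasi-duo property under homomorphic images, and for $(2)\Rightarrow(1)$ you split on whether the maximal right ideal $I$ meets $S$, handling the disjoint case via the localization exactly as the paper does (the paper packages this as its Lemma~\ref{lemma subrings}).

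The one genuine difference is in the case $I\cap S\neq\emptyset$, where you must locate some $x\in X$ inside $I$. You do this by an elementary conductor argument: for $s=x_1y$ with $y=x_2\cdots x_k$ right normalizing, the set $I'=\{r\in R: ry\in I\}$ is a right ideal containing $I$ and $x_1$, so maximality forces either $x_1\in I$ or $y\in I$, and induction finishes. The paper instead passes to the primitive ideal $P=\operatorname{ann}_R(R/I)$: since each $x_iR$ is two-sided and $x_1R\cdot x_2R\cdots x_kR\subseteq yR\subseteq P$, primeness of $P$ gives some $x_iR\subseteq P\subseteq I$ in one stroke. Your route is slightly longer but more self-contained, avoiding the (standard) facts that primitive ideals are prime and that $yR$ lies in $P$; the paper's route is quicker once those facts are in hand. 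Both exploit the right normalizing hypothesis in the same essential way, namely to ensure the relevant products behave as two-sided objects.
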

\begin{proof}
  The  implication $(1)\Rightarrow(2)$ is clear as the class of right quasi-duo
  rings is homomorphically closed.

 $(2)\Rightarrow(1)$ Suppose $R/xR$ is right
 quasi-duo, for any $x\in X$. Let $M$ be a maximal right ideal of
 $R$.

 If $M\cap S=\emptyset$, then $MS^{-1}$ is a proper right
ideal of $RS^{-1}$ and Lemma \ref{lemma subrings} yields that $M$
is a two-sided ideal of $R$ in this case.

 Assume $M\cap S \ne \emptyset$.  Then, there are $k\geq 1$ and $x_1,\ldots ,x_k\in X$,
 such that the element  $y=x_1\cdot\ldots\cdot x_k\in M$. Since  $x_iR$,   for any $1\leq i\leq k$,
    is a two-sided ideal of $R$,  we obtain that the product of
    ideals
  $x_1Rx_2R\ldots x_kR\subseteq yR$ is contained in the
 annihilator $P$ of the simple right $R$-module $R/M$, which is a primitive
 ideal.  Therefore there exists $1\leq i\leq k$, such that $x=x_i\in P\subseteq M$.
This and the assumption imply  that   the canonical  image of $M$
in $R/xR$ is a two-sided maximal ideal of $R/xR$ and so is $M$ in
$R$.

The
 above shows that every maximal right ideal of $R$ is two-sided,
 i.e. the statement (1) holds.
\end{proof}

In the following example we construct a domain $R$ which is not
right-quasi duo but   its localization by a central element is a
duo ring.

\begin{example}
Let $p$ be an odd prime number,  $A\subseteq \mathbb{Q}$ denote
the localization of $\mathbb{Z}$ with respect to the maximal ideal
$p\mathbb{Z}$ and let $\mathbb{H}$ stand for the Hamilton
quaternion algebra over $\mathbb{Q}$ and  $R=A[i,j,k]\subseteq
\mathbb{H}$. Then $R$ is not right quasi-duo, as $R$ can be
homomorphically mapped onto $2\times 2$ matrices over the field
$\mathbb{Z}_p$. Let $S=\{p^k\mid k\geq 1\}$. Then $S$ is a central
multiplicatively closed set of $R$ such that the localization
$RS^{-1}=\mathbb{H}$ is a  duo ring.
\end{example}
 Notice that $R[x,x^{-1};\si]$ can be considered
 both as a localization of $R[x,\si]$ and of $R[x,\si^{-1}]$ with respect the multiplicatively
 closed set generated by $x$. Thus,  applying  Corollary \ref{laurent poly 2}(2) and
  Theorem
\ref{localization}  to one of the above mentioned    rings
 and $X=\{x\}$ we get the following corollary.
\begin{corollary}
\label{thm Laurent}
 If $\C$ is a right quasi-duo ring, then
 so are $\A$ and $R[x;{\si}^{-1}]$.
\end{corollary}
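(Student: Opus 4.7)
The plan is to apply Theorem \ref{localization} twice, taking the singleton set $X=\{x\}$, first inside $\A = R[x;\si]$ and then inside $R[x;\si^{-1}]$. As noted in the paragraph preceding the statement, $\C$ is the localization of each of these two rings at the multiplicatively closed set generated by $x$ (respectively by $x^{-1}$, after relabelling), so $\C$ itself will play the role of the localized ring $RS^{-1}$ that appears in the hypothesis of Theorem \ref{localization}.

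To verify the hypotheses for $\A$, I would first check that $x$ is a regular right normalizing element of $\A$. Since $\si$ is an automorphism, the defining relation $xa=\si(a)x$ can be rewritten as $ax = x\si^{-1}(a)$, which extends to $f(x)x = x g(x)$ for suitable $g\in \A$; this gives $\A\, x\subseteq x\A$. Regularity of $x$ in $\A$ follows by a direct look at leading and trailing coefficients. The localization $\A S^{-1}$ is $\C$, which is right quasi-duo by assumption. What remains for Theorem \ref{localization}(2) is that $\A/x\A$ be right quasi-duo, and this ring is canonically isomorphic to $R$; Corollary \ref{laurent poly 2}(2) tells us that $R$ is right quasi-duo (it is even stated there as a direct consequence of $\C$ being right quasi-duo). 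Theorem \ref{localization} then yields that $\A$ is right quasi-duo.

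For $R[x;\si^{-1}]$ I would repeat the same argument after a change of indeterminate: setting $y=x^{-1}$ inside $\C$, one has $ya = \si^{-1}(a)y$, so the subring generated by $R$ and $y$ is a copy of $R[y;\si^{-1}]$, and $\C$ is its localization at the multiplicatively closed set generated by $y$. The verification is identical, with $y$ in place of $x$, and again $R[y;\si^{-1}]/yR[y;\si^{-1}]\cong R$ is right quasi-duo. I do not expect a genuinely hard step; the only delicate point is confirming the right normalizing property of the indeterminate in a skew polynomial ring, and this is where the hypothesis that $\si$ (hence $\si^{-1}$) is an automorphism is used essentially.
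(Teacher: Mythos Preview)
Your proposal is correct and follows essentially the same approach as the paper: apply Theorem \ref{localization} with $X=\{x\}$ to each of $\A$ and $R[x;\si^{-1}]$, using that $\C$ is their common localization and that the quotient by $x$ is $R$, which is right quasi-duo by Corollary \ref{laurent poly 2}(2). Your write-up simply spells out the details (regularity and right normalizing property of $x$, the identification $R[y;\si^{-1}]\subseteq \C$ via $y=x^{-1}$) that the paper leaves implicit in the paragraph preceding the corollary.
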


We will see in Example \ref{not commutative} that the reverse
implication in the above corollary does not hold in general.
However it will turn out that the polynomial ring $R[x]$ is right
quasi-duo if and only if $R[x,x^{-1}]$ is right quasi-duo.

It is known that if the endomorphism $\tau$ of the ring $R$ is
injective, then there exists a universal over-ring $A(R,\tau)$ of
$R$, called the \it $\tau$-Cohn-Jordan extension of $R$, \rm  such
that $\tau$ extends to an automorphism of $A(R,\tau)$ and
$A(R,\tau)=\bigcup_{i=0}^{\infty}\tau^{-i}(R)$.

The following proposition will enable us to replace an injective
endomorphism  by an automorphism in some of our considerations.

\begin{prop}\label{q-duo up}
 Let  $\tau$ be an injective endomorphism of $R$. Then:
\begin{enumerate}
  \item If $R$ is right (left) quasi-duo, then so is $A(R,\tau)$.
  \item If $R[x;\tau]$ is right (left) quasi-duo, then so is $A(R,\tau)[x;\tau]$.
   \end{enumerate}

\end{prop}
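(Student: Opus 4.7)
The key structural observation is that $A(R,\tau) = \bigcup_{n \geq 0} R_n$, where $R_n := \tau^{-n}(R)$, and each $R_n$ is isomorphic to $R$ via the automorphism $\tau^n$ of $A(R,\tau)$; hence every $R_n$ is right (resp.\ left) quasi-duo whenever $R$ is. For part (2), the identity $\tau(R_n) = R_{n-1} \subseteq R_n$ makes each $R_n[x;\tau]$ a subring of $A(R,\tau)[x;\tau]$, and $A(R,\tau)[x;\tau] = \bigcup_n R_n[x;\tau]$; moreover each $R_n[x;\tau]$ is isomorphic to $R[x;\tau]$ (via $\tau^n$ on coefficients and $x \mapsto x$), so is quasi-duo by hypothesis. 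Both parts therefore reduce to one common lemma: \emph{an ascending union $B = \bigcup_n B_n$ of right (resp.\ left) quasi-duo unital subrings sharing the unit of $B$ is itself right (resp.\ left) quasi-duo.}

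To prove this lemma in the right case, I would take a maximal right ideal $M$ of $B$ and arbitrary $a \in B$, $m \in M$, aiming at $am \in M$. If $am \notin M$, maximality of $M$ yields $amb + m' = 1$ for some $b \in B$ and $m' \in M$. Choosing $n$ large enough that $a$, $m$ and $b$ all lie in $B_n$, the equation forces $m' = 1 - amb$ into $B_n$ as well, so $m' \in M \cap B_n =: M_n$, and the relation reads $amB_n + M_n = B_n$ inside $B_n$.

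Finally, I would invoke the quasi-duo property of $B_n$: the proper right ideal $M_n$ sits inside some maximal right ideal $L$ of $B_n$ by Zorn's lemma, and $L$ is automatically two-sided. Since $m \in M_n \subseteq L$, two-sidedness forces $am \in B_n L \subseteq L$; hence $amB_n + M_n \subseteq L \subsetneq B_n$, contradicting the equality above. The left quasi-duo case is the exact mirror argument. The main conceptual hurdle, and the step where one is tempted to overcomplicate, is the urge to show that $M \cap B_n$ is itself maximal in $B_n$ (which can genuinely fail): it suffices to pass to a maximal overideal $L$ and exploit that $L$ is automatically two-sided.
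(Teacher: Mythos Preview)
Your proof is correct. The directed-union lemma you isolate is sound: the crucial point---that $M\cap B_n$ need not be maximal in $B_n$ but can always be enlarged to a maximal (hence two-sided) right ideal---is handled cleanly, and the contradiction from $1=amb+m'\in L$ follows.

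Your route differs from the paper's in two respects. For part (1), the paper invokes the Lam--Dugas criterion (a ring is right quasi-duo iff $Ra+Rb=R$ implies $aR+bR=R$) and simply applies $\tau^n$ to the relation $1=za+wb$ to push it into $R$, then pulls back using that $\tau$ is an automorphism of $A(R,\tau)$; this is shorter because it outsources the ideal-theoretic work to the cited criterion. Your approach is more self-contained, proving the needed preservation directly from the definition via the union lemma. For part (2), the paper does not repeat the argument with $R_n[x;\tau]$; instead it extends $\tau$ to $R[x;\tau]$ by $\tau(x)=x$, observes that $A(R[x;\tau],\tau)=A(R,\tau)[x;\tau]$, and applies (1) to the ring $R[x;\tau]$. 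That trick avoids checking that $r x^k\mapsto \tau^n(r)x^k$ is a ring isomorphism $R_n[x;\tau]\to R[x;\tau]$. Your uniform treatment, on the other hand, has the advantage of yielding the general statement that quasi-duo is preserved under directed unions of unital subrings---a fact worth recording in its own right.
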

\begin{proof} (1) We present the proof for right quasi-duo property.
Recall (Cf. \cite{LD}) that a ring $R$ is right quasi-duo
 if and only if   $Ra+Rb=R$ implies $aR+bR=R$, for any $a,b\in
 R$. We will use this characterization in the proof.

 Let $a,b\in A(R,\tau)=A$ be such that $Aa+Ab=A$. Then, $1=za+wb$ for some $z,w\in A$ and
 by definition
 of $A$, one can pick $n\in\mathbb{N}$ such that $1=\tau^n(z)\tau^n(a)+\tau^n(w)\tau^n(b)$
 is an equality in $R$. Hence, as $R$ is right quasi-duo, we
 obtain $\tau^n(a)R+\tau^n(b)R=R$. Then also
 $\tau^n(a)A+\tau^n(b)A=A$ and $aA+bA=A$ follows as $\tau$ is an
 automorphism of $A$. This proves (1).

 (2) The injective endomorphism $\tau$ of $R$ can be extended to an
endomorphism of $R[x;\tau]  $ by setting $\tau (x)=x$. Then one
can check that $A(R[x;\tau],\tau)=A( R,\tau )[x;\tau]$. Therefore,
applying the statement (1) to the ring $R[x;\tau]$ we obtain (2).
\end{proof}

 The following example shows that the converse  of the  statement (1) in  the above
lemma does not hold in general. However, we will see later, as a
consequence of  Theorem \ref{main skew poly}, that the converse
implication in Proposition \ref{q-duo up}(2)  holds.

\begin{example} Let $\mathbb{H}$ denote  the Hamilton
quaternion algebra over the field $\mathbb{Q}$ of rationals and
$A=\mathbb{H}(x_i\mid i\in \mathbb{Z}
)=\mathbb{H}\otimes_\mathbb{Q}K$, where $K=\mathbb{Q}(x_i\mid i\in
\mathbb{Z}$ is the field of rational functions in the set
$\{x_i\mid i \in \mathbb{Z}\}$ of indeterminates. Let $\si$ denote
the $\mathbb{Q}$-automorphism of $A$ determined by conditions:
$\si|_\mathbb{H}=\mbox{id}_\mathbb{H}$ and $\si(x_i)=x_{i+1}$, for
all $i\in\mathbb{Z}$. Define $R=\mathbb{H}(x_i\mid i\geq
1)[x_0]\subseteq A$. Then  $\si$ induces an endomorphism, also
denoted by $\si$, of $R$ such that
$A=\bigcup_{i=0}^{\infty}\si^{-i}(R)$.  This means that
$A=A(R,\si)$. Clearly $A$ is a division ring, so it is a quasi-duo
ring and $R$ is not right (left) quasi-duo, as it is a polynomial
ring over a noncommutative division ring.
\end{example}

\section{\normalsize THE JACOBSON RADICAL OF QUASI-DUO SKEW POLYNOMIAL RINGS OF AUTOMORPHISM  TYPE}

 In this short section we get
  precise description of the Jacobson radical
  of a skew polynomial ring of automorphism type $\A$, in the case $\A$ is right
quasi-duo. For that we will need the following definition and
result by Bedi and Ram.

An element $a\in R$ is called $\si$-{\it nilpotent} if for every
$m\geq 1 $ there exists $n\geq 1$ such that $a{\si}^m(a)\cdots
{\si}^{mn}(a)=0$ and a subset $B$ of $R$ is called ${\si}-nil$ if
every element of $A$ is $\si$-nilpotent.

\begin{theorem}
\label{BediRam} ({\rm \cite{BR}, Theorem 3.1}) If $\si$ be an
automorphism of a ring $R$, then there exist  $\si$-nil ideals
$K\subseteq J(R)$ and $I$ of $R$ such that
  $J(\A)=(I\cap J(R) )+I[x;\si]x$ and
    $J(\C)=K[x,x^{-1};\si]$.
\end{theorem}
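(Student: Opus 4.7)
The plan is to follow the Amitsur-style invertibility characterization of the Jacobson radical, treating first the Laurent case $T=\C$, where the $\mathbb{Z}$-grading and the invertibility of $x$ streamline the analysis, and then descending to $\A$, where the non-invertibility of $x$ is precisely what generates the asymmetric decomposition $(I\cap J(R))+I[x;\si]x$.

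For the Laurent case I would set $K=J(T)\cap R$ and begin by proving that $J(T)$ is homogeneous for the $\mathbb{Z}$-grading. A clean way is to enlarge the coefficients by a central Laurent variable $t$, consider the graded automorphism $x\mapsto tx$, and apply Vandermonde separation in sufficiently many specializations of $t$ to force each homogeneous component of an element of $J(T)$ back into $J(T)$. Since $x$ is a unit, the degree-$n$ piece then has the form $kx^n$ with $k\in K$, giving $J(T)=K[x,x^{-1};\si]$. The inclusion $K\subseteq J(R)$ follows by degree-matching in the quasi-inverse, and $\si$-stability of $K$ is immediate from $\si(K)=xKx^{-1}\subseteq K$. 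For $\si$-nilpotency, fix $a\in K$ and $m\geq 1$; since $K$ is a $\si$-stable ideal of $T$, we have $ax^m\in J(T)$, so $1-ax^m$ is invertible in $T$. The only formal candidate for its inverse is the geometric expansion
\[
1+\sum_{i\geq 1} a\,\si^m(a)\cdots\si^{(i-1)m}(a)\,x^{im},
\]
which lies in $T$ exactly when it terminates, i.e.\ when $a\,\si^m(a)\cdots\si^{nm}(a)=0$ for some $n$, which is the required $\si$-nilpotency.

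For $\A$, define $I=\{a\in R : ax\in J(\A)\}$ and verify that it is a $\si$-stable two-sided ideal; its $\si$-nilpotency follows by the same geometric-series termination applied to $ax^m$ for $a\in I$, and the containment $I[x;\si]x\subseteq J(\A)$ then follows from the quasi-regularity of each $ax^i$ with $a\in I$, $i\geq 1$. The constant term of any $f\in J(\A)$ lies in $J(R)$ because the evaluation $x\mapsto 0$ is a ring homomorphism $\A\to R$ carrying $J(\A)$ into $J(R)$.

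The main obstacle is the reverse containment in the polynomial case: for $f=a_0+a_1x+\cdots+a_nx^n\in J(\A)$, one must show each $a_i$ with $i\geq 1$ lies in $I$ and $a_0\in I\cap J(R)$. Because $x$ is not invertible, one cannot simply rescale components as in the Laurent case. I would proceed by induction on $n$: extract the top coefficient $a_n$ by isolating the leading behaviour via invertibility of $1-fg$ for well-chosen $g$, force $a_n\in I$ through the geometric-series argument, and subtract $a_nx^n$ to reduce the degree. Ensuring additionally that $a_0\in I$, not only $a_0\in J(R)$, is the most delicate point and requires combining the inductive output $f-a_0\in I[x;\si]x$ with the fact that $fx\in J(\A)$ to conclude $a_0x\in J(\A)$, i.e.\ $a_0\in I$.
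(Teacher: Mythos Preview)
The paper does not prove this statement: Theorem~\ref{BediRam} is quoted, with attribution, as Theorem~3.1 of Bedi--Ram~\cite{BR} and is used throughout as a black box. There is therefore no proof in the paper against which to compare your proposal.

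For what it is worth, your outline is essentially the Amitsur-style argument that underlies Bedi--Ram's original proof, so you are on the right track. Two technical points would need attention if you wrote it out in full. First, the Vandermonde separation step for homogeneity of $J(\C)$ presupposes sufficiently many distinct central units, which $R$ need not have; the robust version passes to a polynomial or Laurent extension in a central variable and invokes Amitsur's theorem on the gradedness of $J$, rather than literal specialization. Second, the $\si$-stability of $I=\{a\in R: ax\in J(\A)\}$ is not quite automatic: from $ax\in J(\A)$ one obtains $\si(a)x^{2}=x(ax)\in J(\A)$ and $ax^{2}=(ax)x\in J(\A)$, but deducing $\si(a)x\in J(\A)$ requires an extra step (in effect one first shows that $ax^{m}\in J(\A)$ for all $m\geq 1$ and then uses this uniformly). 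The geometric-series termination argument for $\si$-nilness and the degree-induction for the reverse inclusion are both sound in outline.
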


\begin{lemma}\label{basic fact}
Suppose that $\A$ is right quasi-duo. Let  $M$ be a maximal ideal
of $\A$ and  $M_0=M\cap R$. Then  precisely one of the following
conditions holds:
\begin{enumerate}
\item If there exist $n\geq 1$ such that $x^n\in M$, then $x\in M$, $M_0$
is a maximal ideal of $R$ and  $M=M_0+\A x$;
 \item If for every $n\geq 1$, $x^n\not\in M$, then  $M$ and $M_0$
are   $\si$-stable, $M_0=\{r\in R\mid \exists _{m\geq 0}\; rx^m\in
M\}=\{r\in R\mid rx\in M\}$ and $A=R/M_0$ is a domain such that
$A[x;\si]$ is right quasi-duo.
\end{enumerate}
\end{lemma}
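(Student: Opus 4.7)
The plan is to work in the quotient $\A/M$, which I first observe is a division ring. Indeed, in a right quasi-duo ring every maximal two-sided ideal is a maximal right ideal: any maximal right ideal of $\A$ containing $M$ is two-sided by the quasi-duo hypothesis, and is forced to equal $M$ by maximality of $M$ among two-sided ideals. Thus $\A/M$ is right primitive, hence a division ring. Let $\bar{\cdot}$ denote images in $\A/M$, and let $\tilde{\si}$ be the automorphism of $\A$ extending $\si$ with $\tilde{\si}(x)=x$. From the defining relation $xr=\si(r)x$ one obtains the identity $\tilde{\si}(f)\cdot x = x\cdot f$ for every $f\in\A$, which will be the key to the $\si$-stability arguments.

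Suppose first that $x^n\in M$ for some $n\ge 1$. Since $\A/M$ has no zero divisors, $\bar{x}^n=0$ forces $\bar{x}=0$, i.e.\ $x\in M$, and hence $\A x\subseteq M$. Under the canonical projection $\pi\colon\A\to\A/\A x\cong R$, the image $\pi(M)$ is a maximal ideal of $R$ by the ideal correspondence. Using the direct decomposition $\A=R\oplus\A x$, one checks that $\pi(M)=M\cap R=M_0$, so $M_0$ is maximal in $R$ and $M=\pi^{-1}(M_0)=M_0+\A x$.

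Suppose now that $x^n\notin M$ for every $n\ge 1$. Then $\bar{x}$ is a nonzero, hence invertible, element of the division ring $\A/M$. Reducing $\tilde{\si}(f)x=xf$ modulo $M$ gives $\overline{\tilde{\si}(f)}=\bar{x}\bar{f}\bar{x}^{-1}$, so $\tilde{\si}(M)\subseteq M$ and (applying the inverse) $\tilde{\si}^{-1}(M)\subseteq M$; restricting to $R$ yields the $\si$-stability of $M_0$. The inclusions $M_0\subseteq\{r\in R:rx\in M\}\subseteq\{r\in R:\exists m\ge 0,\,rx^m\in M\}$ are immediate, and the reverse inclusion follows because $rx^m\in M$ translates to $\bar{r}\bar{x}^m=0$ with $\bar{x}^m$ invertible, forcing $\bar{r}=0$. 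Finally, $R/M_0$ embeds into $\A/M$ and so is a domain; the $\si$-stability of $M_0$ makes $M_0\A=M_0[x;\si]$ a two-sided ideal of $\A$, and $\A/M_0\A\cong(R/M_0)[x;\si]=A[x;\si]$ is right quasi-duo as a quotient of the right quasi-duo ring $\A$.

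The delicate step is the $\si$-stability of $M$ itself in Case 2: one is tempted to argue only about $M_0$, but the statement demands the assertion for $M$. The identity $\tilde{\si}(f)x=xf$ together with the invertibility of $\bar{x}$ is exactly what handles this; everything else reduces to computations inside the division ring $\A/M$. The two cases are evidently mutually exclusive and exhaustive, so nothing further is required.
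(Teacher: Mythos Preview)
Your proof is correct and follows essentially the same route as the paper's: both arguments pass to the division ring $\A/M$ and exploit the normalizing relation $xr=\si(r)x$ (which you extend to $\tilde{\si}(f)x=xf$ on all of $\A$), splitting into the cases $\bar{x}=0$ and $\bar{x}$ invertible. The paper phrases Case~2 in terms of $M$ being prime and $x$ being a normalizing element rather than invoking invertibility of $\bar{x}$ directly, and it omits the verification that $M_0$ is maximal in Case~1, but these are presentational differences only.
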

\begin{proof}The $n$-th power $(x)^n$ of the ideal generated by
$x$ is equal to $\A x^n$. Thus if $x^n\in M$, then $x\in M$ and
clearly $M=M_0+\A x$, i.e. (1) holds.

Suppose $x\not \in M$ and let $m\in M$. Then $xm=\si(m)x$ and
$mx=x\si^{-1}(m)$ belong to $M$.   Since $M$ is prime ideal and
$x$ is a centralizing element, we obtain $\si(m), \si^{-1}(m)\in
M$, for $m\in M$, i.e. $M$ is $\si$-stable. A similar argument
leads to $M_0=\{r\in R\mid \exists _{m\geq 0}\; rx^m\in M\}$. Now
$B=R/M_0$ embeds canonically into $\A/M$ which is a division ring
(as $\A$ is right quasi-duo), so $B$ is a domain and $B[x;\si]$ is
right quasi-duo as a homomorphic image of $\A$. Hence (2) holds.
\end{proof}
Let us remark that the ideal $M_0$ from Lemma \ref{basic fact}(1)
need not be maximal in $R$. For example, if $K$ is a field, then
the ideal $M=(tx-1)$ of $K[[t]][x]$ is maximal and $M_0=0$ is not
maximal in the base ring $K[[t]]$.

Although  only automorphisms are considered  in this section, we
will formulate  the following definition in more general setting.
For a ring $R$ with an endomorphism $\tau$ we set
$$N(R)=\{a\in R\mid \exists_{n\geq 1}\;\;
a\tau(a)\ldots\tau^n(a)=0\}.$$ The definition of $N(R)$ depends on
the choice of the endomorphism $\tau$, we hope that the proper
choice of $\tau$ will be clear from the context. Notice that the
set $N(R)$ is $\tau$-stable

\begin{prop}\label{radical} Let $\si$ be an automorphism of $R$
and  $\A$ be right (left) quasi-duo ring. Then:
\begin{enumerate}
\item  $N(R)$ is a two-sided, $\si$-stable ideal of $R$ such that
$J(\A)= (N(R)\cap J(R))+ N(R)[x;\si]x$, the ring $R/N(R)$ does not
contain nonzero $\si$-nilpotent elements and the ring   $R/(N\cap
J(R))$ is reduced. In particular, every nilpotent element of $R$
is $\si$-nilpotent.

\item Let $\mathcal{A}$ denote the set of
all maximal ideals $M$ of $\A$ such that $x\not\in M$ and
$\mathcal{B}$ the set of remaining maximal ideals of $\A$. Then:

\begin{enumerate}
\item $\bigcap_{M\in \mathcal{B}}M=J(R)+R[x;\si]x$;
\item $N(R)=\bigcap_{M\in
\mathcal{A}}M_0$, where $M_0=M\cap R$ and $\bigcap_{M\in
\mathcal{A}}M=N(R)[x;\si]$;
\end{enumerate}
\end{enumerate}
\end{prop}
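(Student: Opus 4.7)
The plan is to identify the ideal $N(R)$ with the Bedi--Ram $\si$-nil ideal $I$ supplied by Theorem \ref{BediRam}, and then read off everything in (1) from this identification. The key observation is that $(ax)^n=a\si(a)\cdots\si^{n-1}(a)x^n$, so $ax\in\A$ is nilpotent if and only if $a\in N(R)$. Because $\A$ is quasi-duo, $\A/J(\A)$ is reduced, so $ax$ is nilpotent iff $ax\in J(\A)$; writing $ax\in (I\cap J(R))+I[x;\si]x$ and comparing coefficients forces $a\in I$, giving $N(R)\subseteq I$. Conversely $I$ is $\si$-nil, so the $m=1$ case of that definition places any $a\in I$ into $N(R)$; thus $N(R)=I$. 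This at once shows $N(R)$ is a two-sided ideal, and $\si$-stability follows from the definition together with $\si$ being an automorphism of $R$. The formula for $J(\A)$ is then just Theorem \ref{BediRam} rewritten; intersecting with $R$ gives $J(\A)\cap R=N(R)\cap J(R)$, so $R/(N(R)\cap J(R))$ embeds into the reduced ring $\A/J(\A)$ and is reduced, and in particular every nilpotent element of $R$ lands in $N(R)\cap J(R)\subseteq N(R)$ and is $\si$-nilpotent. For the statement on $R/N(R)$, if $\bar a$ were $\si$-nilpotent there, the $m=1$ case produces $b=a\si(a)\cdots\si^n(a)\in N(R)=I$; applying the $\si$-nil property of $I$ to $b$ at exponent $m=n+1$ telescopes to $a\si(a)\cdots\si^N(a)=0$ in $R$ for some $N$, so $\bar a=0$.

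Part (2)(a) is a direct consequence of Lemma \ref{basic fact}(1): every $M\in\mathcal{B}$ takes the form $M_0+\A x$ with $M_0\subseteq R$ a maximal ideal, and since $R$ itself is quasi-duo by Corollary \ref{laurent poly 2}(1), each maximal ideal of $R$ has a division-ring quotient and conversely yields such a maximal ideal $M_0+\A x\in\mathcal{B}$. Taking intersections and using $J(R)=\bigcap_{M_0}M_0$ (again from the quasi-duo hypothesis) gives $\bigcap_{M\in\mathcal{B}}M=J(R)+\A x$.

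For (2)(b) the inclusion $\supseteq$ is short: for $M\in\mathcal{A}$, Lemma \ref{basic fact}(2) makes $R/M_0$ a domain on which $\si$ acts as an automorphism, so the defining identity of $N(R)$ forces $N(R)\subseteq M_0\subseteq M$, and two-sidedness of $M$ promotes this to $N(R)[x;\si]\subseteq\bigcap_{M\in\mathcal{A}}M$. The reverse inclusion is the main obstacle, because one has to produce enough maximal ideals in $\mathcal{A}$ to cut out exactly $N(R)[x;\si]$. I handle it by passing to $\overline R:=R/N(R)$ and $Q:=\overline R[x;\si]\cong \A/N(R)[x;\si]$: this makes sense because $N(R)$ is $\si$-stable (so $\si$ descends to an automorphism of $\overline R$), and $Q$ is again quasi-duo. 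Applying Theorem \ref{BediRam} to $Q$ gives $J(Q)=(\overline I\cap J(\overline R))+\overline I[x;\si]x$ for some $\si$-nil ideal $\overline I\subseteq\overline R$, and part (1) of the present proposition applied to $\overline R$ forces $\overline I\subseteq N(\overline R)=0$, so $Q$ is semiprimitive. Part (2)(a), already proved and now applied to $Q$, identifies the intersection of the maximal ideals of $Q$ that contain $x$ as $J(\overline R)+Qx$; writing $\overline M$ for the image of $M$ in $Q$, this yields $\bigl(\bigcap_{M\in\mathcal{A}}\overline M\bigr)\cap(J(\overline R)+Qx)=J(Q)=0$. For any $p\in\bigcap_{M\in\mathcal{A}}\overline M$ the element $xp$ lies in the same intersection (two-sidedness) and also in $Qx$, hence $xp=0$; since $\si$ is injective on $\overline R$, $x$ is a non-zero-divisor in $Q$, so $p=0$. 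Thus $\bigcap_{M\in\mathcal{A}}\overline M=0$, i.e., $\bigcap_{M\in\mathcal{A}}M=N(R)[x;\si]$, and intersecting with $R$ yields $N(R)=\bigcap_{M\in\mathcal{A}}M_0$.
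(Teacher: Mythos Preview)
Your proof is correct and, for part~(1) and part~(2)(a), follows the paper's argument essentially verbatim --- you identify $N(R)$ with the Bedi--Ram ideal $I$ via the equivalence ``$ax$ nilpotent $\Leftrightarrow a\in N(R)$'' combined with reducedness of $\A/J(\A)$, and you fill in the telescoping detail for $N(R/N(R))=0$ that the paper leaves implicit.

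Where you diverge is in part~(2)(b). The paper obtains the first equality $N(R)=\bigcap_{M\in\mathcal A}M_0$ in one line: from~(1) we have $N(R)=\{a\in R\mid ax\in J(\A)\}$, and then
\[
\{a\mid ax\in J(\A)\}=\bigcap_{M\in\mathcal A\cup\mathcal B}\{a\mid ax\in M\}=\bigcap_{M\in\mathcal A}\{a\mid ax\in M\}=\bigcap_{M\in\mathcal A}M_0,
\]
the last step using the description $M_0=\{r\in R\mid rx\in M\}$ from Lemma~\ref{basic fact}(2). The second equality $\bigcap_{M\in\mathcal A}M=N(R)[x;\si]$ then drops out of~(1) and~(2)(a). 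You instead prove the second equality first, by passing to $Q=(R/N(R))[x;\si]$, invoking~(1) again to see $J(Q)=0$, and using the regularity of $x$ to force $\bigcap_{M\in\mathcal A}\overline M=0$; the first equality is then obtained by intersecting with $R$. Both routes are sound; the paper's is shorter because it exploits directly the coefficient-wise characterisation of $M_0$ supplied by Lemma~\ref{basic fact}(2), whereas your quotient argument re-derives the same information through semiprimitivity of $Q$.
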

\begin{proof}(1) Let $I$ be the ideal of $R$ described in  Theorem \ref{BediRam}.
 Clearly $I\subseteq N(R)$, as  $I$ is  $\si$-nil.
Since $\A / J(\A)$ is a semiprimitive one-sided quasi-duo ring, it
is reduced. If $a\in N(R)$, then $ax\in \A$ is a nilpotent element
and $N(R)\subseteq I$ follows, i.e. $I=N(R)$. Now it is easy to
complete the proof of (1) using Theorem \ref{BediRam} and the fact
that $\A /J(\A)$ is a reduced ring.

(2)(a) If $M_0$  is a maximal ideal of $R$, then $M_0+\A x\in
\mathcal{B}$.  This implies, as $R$ is a right quasi-duo, that
$\bigcap_{M\in \mathcal{B}}M=J(R)+\A x$, i.e. (2)(a) holds.

(b) Using Lemma \ref{basic fact} and the statement (a) we obtain
\begin{equation*}
\begin{split}
 N(R)  & =\{a\in R\mid ax\in J(\A)\}=\bigcap_{M\in\mathcal{A} \cup
\mathcal{B}}\{a\in R\mid ax\in
M\}\\
& =\bigcap_{M\in\mathcal{A}}\{a\in R\mid ax\in M\}=
\bigcap_{M\in\mathcal{A}}M_0.
\end{split}
\end{equation*}
This shows that  the first equation from (b) holds. The second one
is a consequence od (1) and (a) above.
\end{proof}

The above proposition offers precise  description of  $J(\A)$
provided $\A$ is right (left) quasi-duo. Later on,  as a side
effect of the main Theorem \ref{main skew poly},  we will obtain a
similar characterization of the Jacobson radical of a right (left)
quasi-duo skew polynomial ring $R[x;\tau]$ of endomorphism type.
\section{\normalsize MAIN RESULTS}
In this section we  characterize skew polynomial rings which are
right (left) quasi-duo. When $\tau$ is an automorphism, then the
opposite ring to $R[x;\tau]$ is also a skew polynomial ring of
automorphism type. This means that the characterizations  of right
and left quasi-duo properties of  skew polynomial rings are
left-right symmetric, in this case. We  show in Theorem \ref{main
skew poly} that we also have such symmetry for arbitrary
endomorphisms. This will be achieved by reducing the general case
to the case of skew polynomial rings of automorphism type.

Recall that a ring $R$ with an automorphism $\si$ is called \it
$\si$-prime \rm if the product $IJ$ of nonzero $\si$-stable ideals
$I,J$ of $R$ is always nonzero and a $\si$-stable ideal $P$ of $R$
is $\si$-prime if the ring $R/P$ is $\si$-prime.  By $P_{\si}(R)$
we will denote \it the $\si$-pseudoradical of $R$, \rm that is
$P_{\si}(R)$ is the intersection of all non-zero $\si$-prime
ideals  of $R$ (by definition, the empty intersection is equal to
$R$).

The commutator  $ab-ba$ of elements $a$ and $b$ from a ring $R$
will be denoted by $[a,b]$.

In the sequel we  will need the following two lemmas, the first
one is a special case of Lemma 3.2(1) from \cite{Matczuk2}.
\begin{lemma}
\label{3.2 max} Let $R$ be a domain.  Suppose that $R[x;\si]$
contains a maximal ideal $M$ such that $M\cap R=0$.   Then
$P_{\si}(R)\ne 0$.
\end{lemma}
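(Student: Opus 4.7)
First, the case $x \in M$ is trivial: then $\A x \subseteq M$, and combined with $M \cap R = 0$ this forces $M = \A x$, so maximality makes $R \cong \A/M$ simple. A simple domain has no proper nonzero $\sigma$-stable ideals, hence no nonzero proper $\sigma$-prime ideals, and by the empty-intersection convention $P_\sigma(R) = R \neq 0$. Henceforth assume $x \notin M$.

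Since $\A/M$ is simple and hence prime, $M$ is a prime ideal. The identities $xf = \sigma(f)x$ and $fx = x\sigma^{-1}(f)$, with $\sigma$ acting coefficient-wise, together with primality of $M$ and $x \notin M$, show that $M$ is closed under the coefficient-wise action of $\sigma^{\pm 1}$. Pick $f = a_0 + a_1 x + \cdots + a_n x^n \in M$ of minimal positive degree $n$; if $a_0 = 0$ then $f = g x$ with $\deg g = n-1$, and primality together with $x \notin M$ would force $g \in M$, contradicting minimality, so $a_0 \neq 0$. Now pass to $D := \A/M$: the subsets $D\bar x$ and $\bar x D$ are nonzero two-sided ideals of the simple ring $D$ (using $\bar x R = R \bar x$ to absorb multiplications by $R$, and $\bar x^2 \in \bar x D \cap D\bar x$ to absorb multiplications by $\bar x$), so by simplicity both equal $D$ and $\bar x$ is a unit in $D$. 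Consequently the natural map $\C \to D$ sending $x \mapsto \bar x$ is a well-defined surjection whose kernel $M^*$ is a maximal ideal of $\C$ with $M^* \cap R = 0$. For every nonzero $\sigma$-prime ideal $P$ of $R$, the ideal $P\C$ satisfies $P\C \cap R = P \neq 0$, so $P\C \not\subseteq M^*$, and maximality gives the coprimality relation $P\C + M^* = \C$.

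The final step is to use these coprimality relations uniformly to produce a single nonzero $c \in R$ lying in every such $P$, which then witnesses $P_\sigma(R) \neq 0$. This is the main obstacle: the obvious candidates built from $f$, such as the constant-term image $\pi(M) \subseteq R$ under the ring map $\pi \colon \A \to R$, $\pi(x) = 0$, or the ideal of leading coefficients of degree-$n$ polynomials of $M$, satisfy the coprime relation $+P = R$ for every nonzero $\sigma$-prime $P$ rather than the required containment $\subseteq P$. The construction that does succeed, as carried out in Lemma 3.2(1) of \cite{Matczuk2}, exploits the explicit identity $a_0 + a_1 \bar x + \cdots + a_n \bar x^n = 0$ in $D$ combined with invertibility of $\bar x$ in $D$: a carefully chosen $\sigma$-stable ideal of $R$ built from a specific product of the coefficients $a_0,\ldots,a_n$ (together with their $\sigma^i$-translates) is forced to lie inside every nonzero $\sigma$-prime of $R$, producing the required nonzero element of $P_\sigma(R)$.
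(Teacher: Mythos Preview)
The paper does not supply its own proof of this lemma: it simply records that the statement is a special case of Lemma~3.2(1) in \cite{Matczuk2}. Your proposal, in the end, does the same thing. The preliminary reductions you carry out --- the trivial case $x\in M$, the $\si$-stability of $M$, the invertibility of $\bar x$ in $D=\A/M$, and the passage to a maximal ideal $M^*$ of $\C$ with $M^*\cap R=0$ --- are all correct, but none of them is the substance of the lemma. The substance is exactly the step you label ``the main obstacle'' and then decline to carry out.

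Concretely: the coprimality relations $P\C + M^* = \C$ that you derive point in the wrong direction, as you yourself observe; they produce no element of $R$ lying inside every nonzero $\si$-prime $P$. Your last paragraph then asserts that ``a carefully chosen $\si$-stable ideal of $R$ built from a specific product of the coefficients $a_0,\dots,a_n$ (together with their $\si^i$-translates)'' succeeds, but you do not name the ideal, do not show it is nonzero, and do not show it is contained in every nonzero $\si$-prime of $R$. That is the entire content of the lemma, and it is absent. What you have written is therefore an annotated restatement of the paper's citation rather than an independent proof: it is not wrong, but it is not a proof either, and it matches the paper only in that both ultimately defer to \cite{Matczuk2}.
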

\begin{lemma} \label{subdirect} Let $\tau$ be an endomorphism of a ring $R$
and $U\subseteq V$ be ideals of $R$. If $\tau(V)\subseteq V$, then
$U+V[x;\tau]x$ is a two-sided ideal of $R[x;\tau]$ and the ring
$R[x;\tau]/(U+V[x;\tau]x)$ is right (left) quasi-duo if and only
if $R/U$ and $R[x;\tau]/V[x;\tau]\simeq (R/V)[x;\tau]$ are right
(left) quasi-duo rings.
\end{lemma}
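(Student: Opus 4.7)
The plan is to first verify, by routine computation from $U\subseteq V$, $\tau(V)\subseteq V$ (whence $\tau^n(V)\subseteq V$ and $\tau^n(U)\subseteq V$ for every $n\ge 0$), and the ideal property of $U,V$ in $R$, that $I := U + V[x;\tau]x$ is a two-sided ideal of $T := R[x;\tau]$.

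Setting $S := T/I$, I would introduce two canonical two-sided ideals of $S$: namely $K := (V[x;\tau]+I)/I$, the image of $V$, and $N := (R[x;\tau]x + I)/I$, the image of $R[x;\tau]x$. Both are two-sided ideals of $S$ because $V[x;\tau]$ and $R[x;\tau]x$ are two-sided in $T$. Direct identifications give the isomorphisms $S/K \cong (R/V)[x;\tau]$ and $S/N \cong R/U$. A coefficient-wise calculation yields the crucial annihilation relations $KN = NK = 0$: any such product, pulled back to $T$, has coefficients in $V$ and strictly positive $x$-degree, so it lies in $V[x;\tau]x\subseteq I$. The forward implication is then immediate, since the class of right (left) quasi-duo rings is closed under homomorphic images.

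For the converse, assume $R/U$ and $(R/V)[x;\tau]$ are right quasi-duo and let $M$ be a maximal right ideal of $S$. If $K\subseteq M$ or $N\subseteq M$, then $M$ corresponds to a maximal right ideal of the appropriate quasi-duo quotient $S/K$ or $S/N$, hence is two-sided, and pulling back shows that $M$ is two-sided in $S$. The key case is when both $K\not\subseteq M$ and $N\not\subseteq M$; maximality then forces $M+K = M+N = S$, so one can pick $1 = k+m = n+m'$ with $k\in K$, $n\in N$, $m,m'\in M$. Since $M$ is a right ideal, $mm'\in M$; using $kn = 0$ one computes $mm' = (1-k)(1-n) = 1 - k - n$. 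Substituting $k = 1-m'$ and $n = 1-m$ rewrites this as $mm' = m + m' - 1$, and since $m + m'\in M$, subtracting gives $1\in M$, the desired contradiction. The left-ideal case is symmetric, replacing $mm'$ by $m'm$ and invoking $nk = 0$ in place of $kn = 0$.

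The main obstacle is locating the correct decomposition of $S$ via the ideals $K$ and $N$ and verifying the annihilation $KN = NK = 0$; once those structural facts are in hand, the forbidden subcase collapses through the short algebraic identity $mm' = m + m' - 1$.
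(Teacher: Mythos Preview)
Your argument is correct, including the computation $mm'=(1-k)(1-n)=1-k-n=m+m'-1$ (note your substitution line has the labels $m,m'$ swapped, but the expression $1-k-n$ is symmetric so the conclusion is unaffected).

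Your route differs from the paper's. The paper observes that $(U+R[x;\tau]x)\cap V[x;\tau]=U+V[x;\tau]x$, i.e.\ that your ideals satisfy $K\cap N=0$, so that $S$ is a subdirect product of $R/U$ and $(R/V)[x;\tau]$; it then invokes the result of Lam and Dugas that a finite subdirect product of right (left) quasi-duo rings is again right (left) quasi-duo. You instead exploit the stronger structural feature $KN=NK=0$ (which in fact follows from $K\cap N=0$, but you verify it directly) and give a self-contained three-line contradiction showing that every maximal one-sided ideal of $S$ must contain $K$ or $N$. What your approach buys is independence from the external citation and a sharper conclusion (you locate all maximal one-sided ideals over one of the two quotients); what the paper's approach buys is brevity, since the subdirect-product lemma is already on the shelf.
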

\begin{proof}
It is clear that $U+V[x;\tau]x $ is a two-sided ideal of
$R[x;\tau]$ and that $\tau$ induces an endomorphism, also denoted
by $\tau$, of the ring $R/V$. Notice that $(U+R[x;\tau]x)\cap
V[x;\tau]=U+V[x;\tau]x$. This implies that $R[x;\tau]$ is a
subdirect product of rings $R/U$ and $(R/V)[x;\tau]$. Now it is
easy to complete the proof with the help  of   Corollary 3.6(2)
\cite{LD}, which states that a finite subdirect product of right
(left) quasi-duo rings is also such.
\end{proof}

The following theorem is the key ingredient for our  description
of one-sided quasi-duo skew polynomial rings.

\begin{theorem}\label{R domain skew poly}
 Let $R$ be a domain with an automorphism $\si$.
  If $R[x;\si]$ is right quasi-duo, then  $R$ is commutative and
$\si=\mbox{\rm id}_R$.
\end{theorem}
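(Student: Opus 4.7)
Since $R$ is a domain, $N(R)=0$, and Proposition~\ref{radical}(1) gives $J(R[x;\sigma])=0$; Proposition~\ref{radical}(2)(b) then gives $\bigcap_{M\in\mathcal A}M_0=0$, where $\mathcal A$ is the family of maximal right ideals $M$ of $R[x;\sigma]$ with $x\notin M$ and $M_0=M\cap R$. Thus $R$ embeds subdirectly in $\prod_{M\in\mathcal A}R/M_0$, and the theorem reduces to showing that each $R/M_0$ is commutative with trivial induced $\sigma$-action. By Lemma~\ref{basic fact}(2) and Lemma~\ref{subdirect} the hypotheses pass to the quotient $(R/M_0,\sigma)$, so I may replace $R$ by $R/M_0$ and assume $M\cap R=0$, at which point Lemma~\ref{3.2 max} becomes available.

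The decisive construction is the element $f=1+x\in R[x;\sigma]$. A short degree-and-leading-coefficient argument using that $R$ is a domain shows $f$ has no right inverse, so $fR[x;\sigma]$ lies in a maximal right ideal $M'$, necessarily two-sided by the quasi-duo hypothesis. From $1\notin M'$ and $1+x\in M'$ one concludes $x\notin M'$, hence $M'\in\mathcal A$. In $D_{M'}=R[x;\sigma]/M'$ the image $\bar x=-1$ already lies in $R/M'_0$, so $D_{M'}=R/M'_0$ is a division ring; the defining relation $\bar x\bar r=\sigma(\bar r)\bar x$ then collapses to $\bar r=\sigma(\bar r)$, so $\sigma$ acts trivially on $R/M'_0$, and $(R/M'_0)[x;\sigma]=(R/M'_0)[x]$ is a right quasi-duo ordinary polynomial ring over the division ring $R/M'_0$.

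The division-ring subcase now closes the argument for $R/M'_0$: if $D$ is a division ring and $D[x]$ is right quasi-duo, then for every $a\in D$ the proper right ideal $(x-a)D[x]$ is contained in a two-sided maximal right ideal, so both $r(x-a)$ and $(x-a)r$ lie in it for each $r\in D$; their difference $-[r,a]$ lies in the intersection of that ideal with $D$, which is $0$ since $D$ is a division ring, so $[r,a]=0$, $a\in Z(D)$, and the arbitrariness of $a$ forces $D=Z(D)$ commutative. Hence $R/M'_0$ is a commutative field.

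To finish, I run the same construction with $c-x$ in place of $1+x$ for each nonzero central $c\in R$: each produces some $M'\in\mathcal A$ with $c\notin M'_0$ and $R/M'_0$ a commutative field on which $\sigma$ acts trivially, so the two-sided ideal $I$ of $R$ generated by the commutators $[r,s]$ together with the differences $r-\sigma(r)$ lies in all these $M'_0$. Combining $\bigcap_{\mathcal A}M_0=0$ with Lemma~\ref{3.2 max} (to eliminate the residual $\sigma$-stable obstruction inside the reduced situation $M\cap R=0$) then yields $I=0$, that is, $R$ is commutative and $\sigma=\mathrm{id}_R$. The hard part will be precisely this last globalization step: upgrading ``$R/M'_0$ is a commutative field with trivial $\sigma$'' for the distinguished subfamily of $M'\in\mathcal A$ produced by the central-minus-$x$ elements to the same conclusion for every $M\in\mathcal A$, which is where the delicate interaction between the semiprimitive decomposition of Proposition~\ref{radical}(2)(b) and the pseudoradical condition $P_\sigma(R)\neq 0$ furnished by Lemma~\ref{3.2 max} has to be exploited.
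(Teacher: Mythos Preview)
Your argument has a genuine gap at the globalization step, which you yourself flag as ``the hard part.'' After the reduction to $M\cap R=0$, your construction with $1+x$ (or $c-x$ for central $c$) produces a maximal ideal $M'\in\mathcal A$ with $R/M'_0$ a commutative field on which $\sigma$ acts trivially---but you have no control on $M'_0$: it may well be nonzero, so this conclusion says nothing directly about $R$. Your attempt to patch this by varying over all nonzero central $c$ shows only that the ideal $I$ generated by commutators and elements $r-\sigma(r)$ lies in $\bigcap_c M'_{c,0}$; that intersection meets the center of $R$ trivially, but in a possibly noncommutative $R$ this does not force it to be zero. The appeal to Lemma~\ref{3.2 max} at the end is left as a gesture, and I do not see how the pseudoradical condition $P_\sigma(R)\ne 0$ helps shrink $\bigcap_c M'_{c,0}$ to zero along the lines you sketch.

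The paper's proof sidesteps this entirely by replacing your element $1+x$ with $ax+1$ for some $0\ne a\in P_\sigma(R)$ (available precisely by Lemma~\ref{3.2 max}). Any maximal ideal $W$ containing $ax+1$ has $W_0=W\cap R$ completely prime and $\sigma$-stable by Lemma~\ref{basic fact}(2); hence if $W_0\ne 0$ it is a nonzero $\sigma$-prime ideal, so $P_\sigma(R)\subseteq W_0$, whence $a\in W$, $ax\in W$, and $1\in W$, a contradiction. Thus $W_0=0$ is \emph{forced}. Now the commutator computations $[a,ax+1]=(a^2-a\sigma(a))x\in W$ and, once $\sigma=\mathrm{id}_R$ is established, $[b,ax+1]=[b,a]x\in W$ give identities in $R$ itself (via $W\cap R=0$ and primeness of $W$), yielding $\sigma=\mathrm{id}_R$ and commutativity of $R$ directly. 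The key idea you are missing is that the pseudoradical is not used to decompose $R$ further, but to guarantee $W\cap R=0$ for the single auxiliary maximal ideal $W$; no globalization over a family is needed.
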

\begin{proof} Suppose that $\A$ is right quasi-duo. Let $\mathcal{A}$ denote
the set of all maximal ideals $M$ of $ R[x;\si] $ such that
$x\not\in M $. Since $R$ is a domain and $\A$ is right quasi-duo,
Proposition \ref{radical} implies that:\\
 (i) $0=J(\A)=\bigcap_{M\in \mathcal{A}} M$,
  i.e. $\A$ is a subdirect product of division rings $\A/M$, where $M$ ranges over
$\mathcal{A}$.\\
(ii) $M_0=M\cap R$ is a $\si$-stable ideal of $R$,  for any $M\in
\mathcal{A}$.

 Let $M\in \mathcal{A}$ and $\pi\colon
R[x;\si]\rightarrow R[x;\si]/M=D $ be an epimorphism onto a
division ring $D$. We claim that $D$ is commutative. Since
 $M_0$ is a $\si$-stable ideal of $R$,
$\pi$ induces an epimorphism $f\colon (R/M_0)[x;\si]\rightarrow D$
such that  $\ker f\cap(R/M_0) =0$. Moreover $x\not\in\ker f$, as
$x\not\in M$. Since $D$ is a division ring, the above yields that
 $R/M_0$ is a domain,
$\ker f$ is a maximal ideal of $(R/M_0)[x;\si]$ not containing
$x$. Thus, eventually replacing $R$ by $R/M_0$ we may assume that
$M\in \mathcal{A}$ has zero intersection with the base ring $R$.
Then, Lemma \ref{3.2 max}, shows that $P_{\si}(R)\ne 0$.

Take $ 0\ne a\in  P_{\si}(R)$    and consider a maximal right
ideal $W$ of $R[x;\si]$ containing $ax+ 1$. By assumption, $W$ is
a two-sided ideal.  Clearly $x\not \in W $ and, by Lemma
\ref{basic fact}, $W_0=W\cap R$ is a (completely) prime,
$\si$-stable ideal of $R$.  Thus, if $W_0$ would be nonzero it
would contain $P_{\si}(R)$ and $W=\A$ would follow. Therefore
$W_0=0$.

Observe that $[a,ax+1]=(a^2-a\si(a))x\in W$, as $W$ is a two-sided
ideal of $\A$.  Since  $x$ is a normalizing element of $\A$ and
$W$ is a prime ideal, we obtain $a(a-\si(a))\in W\cap R=0$. This
proves that  $a(a-\si(a))=0$, for any $a\in P_\si(R)$ and the fact
that $R$ is a domain implies that $\si$ is identity on
$P_{\si}(R)$. Then for arbitrary $r\in R$, we have $ar\in
P_{\si}(R)$ and $0=\si(ar)-ar=a(\si(r)-r)$ and  $\si=\mbox{id}_R$
follows.

Let $b\in R$. Then, as $\si=\mbox{id}_R$, we have
$[b,a]x=[b,ax+1]\in W$ and, similarly as above, we get $[b,a]=0$
for all  $b\in R$. This means that  $0=[b,ar]=a[b,r]$, for all
$b,r\in R$,  and yields commutativity of the domain $R$. This
shows that $D$ is commutative as a homomorphic image of a
commutative ring $\A=R[x]$ and completes the proof of the theorem.
\end{proof}

For the endomorphism  $\tau$ of  a ring $R$ let us set
$\mathcal{K}=\bigcup_{i=1}^\infty \ker \tau^i$. Then, as
$\mathcal{K}$  is $\tau$-stable,  $\tau$ induces an injective
endomorphism, also  denoted by $\tau$, of the ring $\bar
R=R/\mathcal{K}$. In particular,  we can consider the
$\tau$-Cohn-Jordan extension $A(\bar R ,\tau)$ of $\bar R$.

Keeping the above notation we can formulate the following theorem
which  characterizes   one-sided quasi-duo skew polynomial rings
of endomorphism type.

\begin{theorem} \label{main skew poly}
 For a ring $R$ with an endomorphism $\tau$, the following
 conditions are equivalent:
\begin{enumerate}
\item $R[x;\tau]$ is a right (left) quasi-duo ring;

\item $R$ and $\bar R[x;\tau]$ are right (left)
quasi-duo rings;

\item $R$ and $A(\bar R,\tau)[x;\tau]$ are right (left) quasi-duo
rings;
\item The following conditions hold:
\begin{enumerate}
 \item $R$ is right (left) quasi-duo and $J(R[x;\tau])=(J(R)\cap
 N(R) )+N(R)[x;\tau]$;
 \item $N(R)$ is a $\tau$-stable ideal of $R$, the factor ring   $R/N(R)$ is
  commutative  and the endomorphism  $\tau$
induces  identity on $R/N(R)$.
\end{enumerate}
\end{enumerate}

\begin{proof} It is clear that (1) implies (2),  as the rings in (2)
are homomorphic images of $R[x;\tau]$.

The implication  $(2)\Rightarrow (3)$ is given by  Proposition
\ref{q-duo up}(2) applied to the ring $\bar R$.

$(3)\Rightarrow (4)$ Case 1, when $\tau=\si$ is an automorphism of
$R$, i.e. $R=\bar R = A(R,\tau)$. Suppose  that $R$ and $\A$ are
right quasi-duo.
 Proposition \ref{radical}  shows that  the statement (4)(a) holds, in
 this case.

  Let $\mathcal{A}$ denote the set  of all maximal ideals of $\A$
 such that $x\not \in M$. Proposition \ref{radical} together with Lemma \ref{basic
 fact}(2) yield that $N(R)$ is a $\si$-stable ideal of $R$ and
 the ring $(R/N(R))[x;\si]\simeq \A/(N(R)[x;\si])$
 is a subdirect product of  domains $(R/M_0)[x,\si]\simeq
 \A/(M[x;\si])$, where $M$ ranges over $\mathcal{A}$   and $M_0=M\cap R$. Now
 the statement (4)(b) is a consequence of Theorem \ref{R domain skew
 poly}. Therefore  Case 1 holds for right quasi-duo property of $\A$.
  Notice  that we also have proved that the ring $R/N(R)$ is reduced.

Making use of  the isomorphism $(\A)^{op}\simeq
R^{op}[x;{\si}^{-1}]$, where $T^{op}$ denotes the ring opposite to
a ring $T$, one can easily get the left version of Case 1.

Case 2, when $\tau$ is an arbitrary endomorphism of $R$. Recall
that for a ring $B$ with an endomorphism $\tau$, $N(B)=\{b\in
B\mid  \exists_ {n\geq 1} b\tau(b)\ldots\tau^n(b)=0  \}$.  Let
$A=A(\bar R,\tau)$, where $\bar R=R/\mathcal{K}$. It is easy to
check that $ {\mathcal K}\subseteq N( R)$, $N(\bar R)=N(A)\cap
{\bar R}$ and $N({\bar R})=N(R)/{\mathcal K}$. This implies that
$N(R)$ is the kernel of the composition of natural homomorphisms
$R\rightarrow \bar R\rightarrow \bar R/N(\bar R) \hookrightarrow
A/N(A)$. The above and  Case 1 applied to the ring $A$ imply that
$N(R)$ is a $\tau$-stable ideal of $R$, $R/N(R)$ is a commutative
reduced ring as $A/N(A)$ is such and $\tau$ induces identity on
$R/N(R)$, as $\tau$ induces identity on $A/N(A)$. This means that
the statement (4)(b) holds and also proves that
$J(R[x;\tau])\subseteq N(R)[x;\tau]$, since the factor ring
$R[x;\tau]/(N(R)[x;\tau])\simeq (R/N(R))[x]$ is semiprimitive as
$R/N(R)$ is reduced. In fact, the last inclusion together with the
fact that $J(R[x;\tau])\cap R\subseteq J(R)$ imply that
$J(R[x;\tau])\subseteq (J(R)\cap N(R))+ N(R)[x;\tau]x$.

 For every ${\bar r}\in N(\bar R)$, ${\bar
r}x$ is a nilpotent element of ${\bar R}[x;\tau]=(R/\mathcal K
)[x;\tau]$. Hence, since $A[x;\tau]$ is a right (left) quasi-duo
ring, $N(\bar R)[x;\tau]x\subseteq J(A[x;\tau])$. Thus for every
$f(x)\in N(\bar R)[x;\tau]x$ there exists $g(x)\in A[x;\tau]$ such
that $(1-f(x))(1+g(x))=1$. On the other hand
$(1-f(x))(1+f(x)+(f(x))^2+\cdots)=1$ in the power series ring
$A[[x;\tau]]$. Consequently $g(x)=f(x)+(f(x))^2+\cdots \in N(\bar
R)[x;\tau]$, so $(N(\bar R)[x;\tau]x)\subseteq J({\bar
R}[x;\tau])$. Therefore, as  ${\mathcal K}[x;\tau]x$ is a nil
ideal of $R[x;\tau]$ and $N(\bar R)=N(R)/{\mathcal K} $, we get
that $N(R)[x;\tau]x\subseteq J(R[x;\tau])$. Now, with the help of
this inclusion, one can check that every element of the ideal
$J=(J(R)\cap N(R))+ N(R)[x;\tau]x$ is quasi regular in
$R[x;\tau]$.
 This together with the
above proved inclusion $J(R[x;\tau])\subseteq J$ show  that
$J(R[x;\tau])=(J(R)\cap N(R) )+N(R)[x;\tau]$ and completes the
proof of the implication $(3)\Rightarrow (4)$.

 The implication $(4)\Rightarrow (1)$ is a direct consequence of
 Lemma \ref{subdirect}
 with  $U=J(R)\cap N(R)$ and $V=N(R)$.
\end{proof}
\end{theorem}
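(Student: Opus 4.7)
The plan is to establish the circle of implications $(1) \Rightarrow (2) \Rightarrow (3) \Rightarrow (4) \Rightarrow (1)$. The first step is immediate: $R$ is a homomorphic image of $R[x;\tau]$ via Corollary \ref{Laurent poly}(3), and $\bar R[x;\tau]$ is the quotient of $R[x;\tau]$ by the two-sided ideal $\mathcal{K}[x;\tau]$ (two-sided because $\tau(\mathcal{K}) \subseteq \mathcal{K}$), so both inherit the quasi-duo property. Step $(2) \Rightarrow (3)$ is a direct application of Proposition \ref{q-duo up}(2) to $\bar R$, on which $\tau$ is injective by construction. Step $(4) \Rightarrow (1)$ is a direct invocation of Lemma \ref{subdirect} with $U = J(R) \cap N(R)$ and $V = N(R)$: condition (4)(b) supplies all the hypotheses needed for that lemma, and (4)(a) identifies the resulting quotient with $R[x;\tau]/J(R[x;\tau])$.

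The core work is $(3) \Rightarrow (4)$, which I would attack in two cases. \textbf{Case 1: $\tau = \sigma$ is an automorphism.} Here $R = \bar R = A(R,\sigma)$. Condition (4)(a) is essentially Proposition \ref{radical}(1). For (4)(b), let $\mathcal{A}$ denote the maximal ideals of $R[x;\sigma]$ not containing $x$. By Proposition \ref{radical}(2)(b), $N(R) = \bigcap_{M \in \mathcal{A}} M_0$ where $M_0 = M \cap R$; Lemma \ref{basic fact}(2) tells us each $M_0$ is $\sigma$-stable and each $R/M_0$ is a domain for which $(R/M_0)[x;\sigma]$ is right quasi-duo. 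Hence $(R/N(R))[x;\sigma]$ is a subdirect product of such skew polynomial rings over domains. Theorem \ref{R domain skew poly} then forces each $R/M_0$ to be commutative and $\sigma$ to act as the identity on it; these two properties pass to the subdirect product, yielding (4)(b). The left version is obtained by noting $R[x;\sigma]^{op} \cong R^{op}[x;\sigma^{-1}]$, so the automorphism case is left-right symmetric.

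\textbf{Case 2: general $\tau$.} I would reduce to Case 1 by passing to $A := A(\bar R, \tau)$. A short direct check gives $\mathcal{K} \subseteq N(R)$, $N(\bar R) = N(A) \cap \bar R$, and $N(\bar R) = N(R)/\mathcal{K}$, identifying $N(R)$ as the kernel of $R \to \bar R \hookrightarrow A \to A/N(A)$. Case 1 applied to $A$ tells us $A/N(A)$ is commutative and reduced with $\tau$ acting trivially, and these properties descend to the subring $R/N(R)$, giving (4)(b). They also give the inclusion $J(R[x;\tau]) \subseteq N(R)[x;\tau]$ (since $(R/N(R))[x]$ is semiprimitive), which combined with $J(R[x;\tau]) \cap R \subseteq J(R)$ yields $J(R[x;\tau]) \subseteq (J(R) \cap N(R)) + N(R)[x;\tau]x$. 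The reverse inclusion $N(R)[x;\tau]x \subseteq J(R[x;\tau])$ is the technically delicate step: I would first show $N(\bar R)[x;\tau]x \subseteq J(A[x;\tau])$ (using that $\bar r x$ is nilpotent for $\bar r \in N(\bar R)$ together with Case 1 for $A$), then for $f(x) \in N(\bar R)[x;\tau]x$ compare the polynomial quasi-inverse $g \in A[x;\tau]$ with the formal power series identity $(1-f)(1 + f + f^2 + \cdots) = 1$ in $A[[x;\tau]]$ to conclude $g \in N(\bar R)[x;\tau]$, and finally lift back to $R[x;\tau]$ using that $\mathcal{K}[x;\tau]x$ is nil.

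The main obstacle is this last power-series lifting argument: one must verify that the quasi-inverse actually lives in the polynomial ring $N(\bar R)[x;\tau]$ rather than merely in a completion, which hinges on the fact that $f$ has strictly positive $x$-valuation so that matching coefficients in $A[[x;\tau]]$ terminates for each fixed degree, and then that the transition from $\bar R$ back to $R$ through the nil ideal $\mathcal{K}[x;\tau]x$ preserves quasi-regularity. Once this is secured, everything else follows from cleanly stringing together Propositions \ref{q-duo up} and \ref{radical}, Lemmas \ref{basic fact} and \ref{subdirect}, and Theorem \ref{R domain skew poly}.
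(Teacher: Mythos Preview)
Your proposal is correct and follows essentially the same approach as the paper's own proof: the same cycle $(1)\Rightarrow(2)\Rightarrow(3)\Rightarrow(4)\Rightarrow(1)$, the same two-case split in $(3)\Rightarrow(4)$, the same reduction to the automorphism case via $A(\bar R,\tau)$, and the same power-series comparison argument to push the quasi-inverse from $A[x;\tau]$ down into $N(\bar R)[x;\tau]$. Even the auxiliary results you invoke (Propositions~\ref{q-duo up} and~\ref{radical}, Lemmas~\ref{basic fact} and~\ref{subdirect}, Theorem~\ref{R domain skew poly}) and the order in which you use them match the paper step for step.
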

\begin{remark}

1. Statements (2) and (3) of the above theorem are of rather
technical nature, but are important   steps  for obtaining the
equivalence of (1) and (4). When $\tau$ is an automorphism of $R$,
then both (2) and (3) boil down to the statement (1), as $A(\bar
R;\tau)=R=\bar R$ in this case.\\
2. Note that  $R[x;\tau]$ is a commutative ring if and only if $R$
 is commutative and $\tau =\mbox{id}_R$.
Thus the  condition (4)(b) can be equivalently stated as ``$N(R)$
is a $\tau$-stable ideal and the ring
$R[x;\tau]/(N(R)[x;\tau])\simeq (R/N(R))[x;\tau]$ is
commutative''.  One can
reformulate some other results in the article in a similar way.\\
3. Observe that it is shown in the proof of Theorem \ref{main skew
poly}  that if $R[x;\tau]$ is right (left) quasi-duo, then the
commutative ring  $R/N(R)$ has to be reduced.
\\ 4. Theorem \ref{main skew poly} shows that the use of skew
polynomial rings of endomorphism type can not be effective in
constructing a ring which is quasi-duo only on one side, as for
that aim  it would be necessary to construct   the base ring with
the same properties first.
\end{remark}

\section{\normalsize APPLICATIONS AND EXAMPLES}

It is known (\cite{R}, \cite{BR}) that if either $R$ is one sided
noetherian or the automorphism $\si$ is of locally finite order,
i.e. when for any $a\in R$ there is $n\geq 1$ such that
$\si^n(a)=a$, then $J(\A)=I[x;\si]$, for some nil ideal $I$ of
$R$. Since the Jacobson radical of quasi-duo rings contains all
nilpotent elements, Theorem \ref{main skew poly} gives immediately
the following theorem.

\begin{theorem}\label{finite case rad} Suppose that $\si$ is an
automorphism of $R$ and either $R$ is  one-sided noetherian  or
$\si$ is of locally finite order. Then $\A$ is right  quasi-duo if
and only if $J(\A)={\mathcal N}(R)[x;\si]$, where ${\mathcal
N}(R)$ is the nil radical  of $R$, $R/{\mathcal N}(R)$ is
commutative and the automorphism of $R/{\mathcal N}(R)$ induced by
$\si$ is equal to $\mbox{\rm id}_{R/{\mathcal N(R)}}$.
 \end{theorem}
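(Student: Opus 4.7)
The plan is to combine Theorem~\ref{main skew poly}, applied to $\tau=\si$, with the cited fact from \cite{R}, \cite{BR} that, under either of the two hypotheses on $R$ or on $\si$, one has $J(\A)=I[x;\si]$ for some nil ideal $I$ of $R$. Everything reduces to identifying $I$ with $\mathcal N(R)$.

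Suppose first that $\A$ is right quasi-duo. Theorem~\ref{main skew poly}(4)(a) yields
$$J(\A)=(J(R)\cap N(R))+N(R)[x;\si]x,$$
and comparing this expression with $J(\A)=I[x;\si]$ coefficient by coefficient---the constant term on the one hand, the terms of positive degree on the other---forces $I=J(R)\cap N(R)$ and $I=N(R)$. Hence $N(R)=I$ is nil, so $N(R)\subseteq\mathcal N(R)$. For the reverse inclusion I would invoke Remark~3 following Theorem~\ref{main skew poly}, which records that $R/N(R)$ is reduced; thus every nilpotent element of $R$ lies in $N(R)$, giving $\mathcal N(R)\subseteq N(R)$ and therefore $N(R)=\mathcal N(R)$. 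The displayed formula for $J(\A)$ then collapses to $N(R)[x;\si]=\mathcal N(R)[x;\si]$, while Theorem~\ref{main skew poly}(4)(b) already delivers the commutativity of $R/\mathcal N(R)$ and the fact that $\si$ induces the identity on this quotient.

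For the converse, assume the three conclusions. Since $\si$ acts as the identity on $R/\mathcal N(R)$, the quotient $\A/\mathcal N(R)[x;\si]$ is isomorphic to the commutative polynomial ring $(R/\mathcal N(R))[x]$, hence right quasi-duo; as $\mathcal N(R)[x;\si]=J(\A)$ by hypothesis, $\A/J(\A)$ is right quasi-duo, and therefore so is $\A$. The only genuinely nontrivial step is the identification $N(R)=\mathcal N(R)$, which combines the coefficient comparison above with the reducedness of $R/N(R)$ from Remark~3; everything else is routine bookkeeping on top of Theorem~\ref{main skew poly} and the Bedi--Ram description of $J(\A)$.
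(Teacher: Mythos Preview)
Your argument is correct and follows the same route the paper sketches: combine the Bedi--Ram/Ram description $J(\A)=I[x;\si]$ for a nil ideal $I$ with Theorem~\ref{main skew poly}, and identify the relevant ideals. The only cosmetic difference is in how the identification is made: the paper observes directly that a quasi-duo ring has all its nilpotents in its Jacobson radical, so $\mathcal N(R)\subseteq J(\A)\cap R=I\subseteq\mathcal N(R)$, whereas you first pass through $N(R)$ via the coefficient comparison and then invoke Remark~3 to close the loop; both arrive at $I=N(R)=\mathcal N(R)$ by the same underlying reducedness fact.
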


The following theorem characterizes quasi-duo skew Laurent
polynomial ring.

\begin{theorem}\label{Laurent q-duo} The skew Laurent polynomial
ring $R[x,x^{-1};\si]$ is right (left) quasi-duo if and only if
$J(R[x,x^{-1};\si])=N(R)[x,x^{-1};\si]$, $R/N(R)$ is commutative
and the automorphism of $R/N(R)$ induced by $\si$ is equal
$id_{R/N(R)}$ if and only if $\C/J(\C)$ is commutative.
\end{theorem}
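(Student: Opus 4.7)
I would prove the three listed conditions equivalent via the cycle (1)$\Rightarrow$(2)$\Rightarrow$(3)$\Rightarrow$(1), writing (1) for the quasi-duo hypothesis, (2) for the radical description together with commutativity of $R/N(R)$ and triviality of the induced automorphism, and (3) for commutativity of $\C/J(\C)$.

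The two easy implications are essentially formal. For (2)$\Rightarrow$(3), the hypothesis on $J(\C)$ combined with the fact that $\si$ induces the identity on the commutative ring $R/N(R)$ yields $\C/J(\C)\cong (R/N(R))[x,x^{-1}]$, which is visibly commutative. For (3)$\Rightarrow$(1), any commutative ring is trivially right and left quasi-duo, and the quasi-duo property descends to and lifts from the factor modulo the Jacobson radical (as noted in the introduction), so $\C$ itself is quasi-duo on both sides.

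The core work lies in (1)$\Rightarrow$(2). First I would apply Corollary \ref{thm Laurent} to deduce that $\A$ is right (left) quasi-duo, and then Theorem \ref{main skew poly} in the automorphism case $\tau=\si$ to obtain that $N(R)$ is a two-sided $\si$-stable ideal of $R$, that $R/N(R)$ is commutative, and that $\si$ acts as the identity on $R/N(R)$. What remains is the equality $J(\C)=N(R)[x,x^{-1};\si]$. For the inclusion $\subseteq$ I would use Theorem \ref{BediRam}, which gives $J(\C)=K[x,x^{-1};\si]$ for some $\si$-nil ideal $K$; every $\si$-nilpotent element of $R$ lies in $N(R)$ (take $m=1$ in the definition of $\si$-nilpotence), so $K\subseteq N(R)$.

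The reverse inclusion is the main obstacle. Given $a\in N(R)$, choose $n$ with $a\si(a)\cdots\si^n(a)=0$; then $(ax)^{n+1}=a\si(a)\cdots\si^n(a)\,x^{n+1}=0$, so $ax$ is nilpotent in $\C$. Since $\C$ is quasi-duo, $\C/J(\C)$ is reduced, forcing $ax\in J(\C)$, and invertibility of $x$ in $\C$ then yields $a\in J(\C)$. Combined with the $\si$-stability of $N(R)$ this gives $N(R)[x,x^{-1};\si]\subseteq J(\C)$. The key point, which distinguishes the Laurent setting from the ordinary skew polynomial case and is responsible for the clean radical description, is precisely this ability to strip off $x$ via its invertibility.
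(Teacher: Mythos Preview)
Your proposal is correct and follows essentially the same route as the paper: for the substantive implication you pass from $\C$ to $\A$ via Corollary~\ref{thm Laurent}, invoke Theorem~\ref{main skew poly} to get the structure of $N(R)$ and of $\si$ on $R/N(R)$, and pin down $J(\C)$ by combining Theorem~\ref{BediRam} with the observation that each $ax$ ($a\in N(R)$) is nilpotent in the reduced ring $\C/J(\C)$, using invertibility of $x$. The paper treats the remaining implications as evident and omits them, whereas you spell out the cycle (2)$\Rightarrow$(3)$\Rightarrow$(1); this is a harmless elaboration rather than a different approach.
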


\begin{proof} Let $T=\C$. We will prove the only nontrivial
implication, that is, we will  show that if $T$ is one-sided
quasi-duo, then $J(T)$, $N(R)$ and $\si$ satisfy conditions
described in the theorem. To this end suppose $T$ is right
quasi-duo.  By Theorem \ref{BediRam},
$J(R[x,x^{-1};\si])=K[x,x^{-1};\si]$ for a suitable $\si$-nil
ideal $K$ of $R$. Clearly $K\subseteq N(R)$. Since $N(R)x$
consists of nilpotent elements and $T/J(T)$ is reduced,
$N(R)\subseteq J(T)$ and $J(T)=N(R)[x,x^{-1};\si]$ follows,  in
this case. By Corollary \ref{thm Laurent}, $R[x;\si]$ is right
quasi-duo and  Theorem \ref{main skew poly} shows that $R/N(R)$ is
commutative and  the automorphism of $R/N(R)$ induced by $\si$ is
equal $id_{R/N(R)}$.
\end{proof}

It is known \cite{KS} that for every ring $R$, $J(R[x,x^{-1}])\cap
R[x]=J(R[x])$. Thus, Theorems \ref{finite case rad} and
\ref{Laurent q-duo} give the following:

\begin{corollary} \label{Polynomials + laurent}
  $R[x]$ is right (left) quasi-duo if and only if $R[x,x^{-1}]$ is
  right (left)
  quasi-duo if and only if
    $J(R[x])=\mathcal{N}(R)[x]$ and the factor ring $R/\mathcal{N}(R)$ is
  commutative, where $\mathcal{N}(R)$ denote the nil radical of
  $R$.
 \end{corollary}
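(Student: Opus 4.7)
The plan is to specialize Theorems \ref{finite case rad} and \ref{Laurent q-duo} to the case $\sigma = \mathrm{id}_R$ and then exploit the identity $J(R[x,x^{-1}]) \cap R[x] = J(R[x])$ from \cite{KS} to bridge the two polynomial rings.

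For the equivalence of $R[x]$ being right (left) quasi-duo with the explicit description in terms of $J(R[x])$ and $\mathcal{N}(R)$, I would apply Theorem \ref{finite case rad} directly with $\sigma = \mathrm{id}_R$. The identity is trivially of locally finite order and the automorphism it induces on $R/\mathcal{N}(R)$ is again the identity, so the conditions listed there collapse to exactly $J(R[x]) = \mathcal{N}(R)[x]$ together with $R/\mathcal{N}(R)$ commutative. Note that when $\sigma = \mathrm{id}$ the set $N(R)$ from the earlier sections is just the set of nilpotent elements of $R$, which coincides with the nil radical $\mathcal{N}(R)$ once we know that $N(R)$ is an ideal and $R/N(R)$ is reduced---a conclusion already built into Theorem \ref{finite case rad}.

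It remains to connect $R[x,x^{-1}]$ to this picture. One direction is immediate from Corollary \ref{thm Laurent}: if $R[x,x^{-1}]$ is right quasi-duo, then so is $R[x]$. For the converse, assume $R[x]$ is right quasi-duo, so that $J(R[x]) = \mathcal{N}(R)[x]$ by the previous step. Intersecting with $R[x]$ and using \cite{KS} yields $J(R[x,x^{-1}]) \cap R[x] = \mathcal{N}(R)[x]$, whence $\mathcal{N}(R) \subseteq J(R[x,x^{-1}])$, and since this Jacobson radical is a two-sided ideal we obtain $\mathcal{N}(R)[x,x^{-1}] \subseteq J(R[x,x^{-1}])$. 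Consequently $R[x,x^{-1}]/J(R[x,x^{-1}])$ is a homomorphic image of the commutative ring $(R/\mathcal{N}(R))[x,x^{-1}]$, hence commutative, hence trivially quasi-duo on both sides, and the same then follows for $R[x,x^{-1}]$ itself. There is no serious obstacle beyond assembling the previous results; the only step requiring a moment of care is the identification of $N(R)$ with $\mathcal{N}(R)$ in the case $\sigma = \mathrm{id}$, which succeeds precisely because the quasi-duo hypothesis forces $R/N(R)$ to be commutative and reduced.
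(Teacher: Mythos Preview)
Your argument is correct and follows essentially the same route as the paper, which simply invokes Theorems \ref{finite case rad} and \ref{Laurent q-duo} together with the identity $J(R[x,x^{-1}])\cap R[x]=J(R[x])$ from \cite{KS}. The only cosmetic difference is that for the implication from $R[x]$ quasi-duo to $R[x,x^{-1}]$ quasi-duo you argue directly that $R[x,x^{-1}]/J(R[x,x^{-1}])$ is commutative, whereas the paper would read this off from the final equivalence in Theorem \ref{Laurent q-duo}; these amount to the same thing.
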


\begin{remark} If the polynomial ring $R[x,y]$ in two commuting indeterminates
$x, y$ is right-quasi-duo, then  $R[x]$ is also right quasi-duo as
a homomorphic image of $R[x,y]$. However the converse does not
hold. Indeed, in \cite{PS} a nil ring $C$ was constructed such
that $C[x]$ is Jacobson radical but not nil.
 Let $R$ be the ring obtained from $C$ by adjoining
unity by the ring $\mathbb{Z}$ of integers. Then $J(R[x])=C[x]$
and $R[x]/J(R[x])\simeq \mathbb{Z}[x]$, so $R[x]$ is quasi-duo.
 However
$R[x]/{\mathcal N}(R[x])$ is not commutative as otherwise $C$
would be contained in ${\mathcal N}(R[x])$, which is impossible.
Hence Corollary \ref{Polynomials + laurent} shows that the ring
$R[x,y]\simeq R[x][y]$ is not right (left) quasi-duo.
\end{remark}
The aim of the following example is twofold.
 Firstly,  it   shows that there exists a ring $R$ with an automorphism
 $\si$ such that  both $\A$ and
 $R[x;\si^{-1}]$ are right quasi-duo but $\C$ is not, i.e. the
 converse of  Corollary \ref{thm Laurent} does not hold.
 Secondly, contrary to the cases of polynomial and  skew Laurent
 polynomial rings, there exist right quasi-duo skew polynomial
 ring $\A$ such that $\A/J(\A)$ is not commutative.
\begin{example}\label{not commutative} Let $\Delta$ be a right quasi-duo ring which is
Jacobson semisimple (for example one can take $\Delta$ to be a
division ring) and $P=\prod_{i\in \mathbb{Z}}\Delta_i$ be
 the direct product of   $\Delta_i=\Delta$. We can identify the
center $F$ of $\Delta$ with the subring of $P$
 consisting with
  constant sequences
 $(f)_{i\in \mathbb{Z}}$, $f\in F$.
 Let $R=F+I \subseteq
 P$, where $I=\bigoplus_{i\in\mathbb{Z}} \Delta_i$. Let
 $\si$ be the right shifting automorphism of
$R$ given by
$\si((a_i)_{i\in\mathbb{Z}})=(a_{i+1})_{i\in\mathbb{Z}}$.

One can easily check $R$ is right quasi-duo. Observe also that
$I[x;\si]x$ and $I[x;\si^{-1}]x$ are nil ideals of $\A$ and
$R[x;\si^{-1}]$, respectively, and the factor rings
$\A/(I[x;\si])$ and $R[x;\si^{-1}]/I[x;\si^{-1}]$ are isomorphic
to the commutative polynomial ring  $F[x]$. Now Lemma
\ref{subdirect} yields that both $\A$ and $R[x;\si^{-1}]$ are
right quasi-duo. Notice that $N(R)=I$ and, as $J(\Delta)=0$, the
ring  $R$ is Jacobson semisimple.
  Thus, by Theorem \ref{BediRam}, $J(R[x,x^{-1};\si])=0$
and $J(\A)=I[x;\si]x$. Now, Theorem \ref{Laurent q-duo} implies
that $R[x,x^{-1};\si]$ is not right quasi-duo. Notice also that if
$\Delta$ is not commutative then $\A/J(\A)$ is  not commutative as
well.
\end{example}
The following example shows  that it is possible that $\A$ is
right quasi-duo whereas $R[x;\si^{-1}]$ is not.
\begin{example}\label{ex. localization}
 Let $K$ be a field,
  $R=K\langle x_i\mid i\in \mathbb{Z},\; x_ix_j=0, \mbox{ for } i < j\rangle$
  and $I$ denote the ideal of $R$ generated
  by all products $x_kx_l$, where  $k,l\in \mathbb{Z}$ and $k>l$.
   Let $\si$ be the $K$-automorphism of $R$ given by
  $\si(x_i)=x_{i+1}$, for all $i\in \mathbb{Z}$.

Note that  $\A/I[x;\si]$ is isomorphic to $P[x;\si] $, where
$P=\prod_{i\in\mathbb{Z}}\Delta _i$ and $\Delta_i=K[x_i]$, for
$i\in \mathbb{Z}$. The same arguments as in Example \ref{not
commutative} (with $F$ replaced by $\prod_{i\in \mathbb{Z}}K$),
show  that this factor ring is right quasi-duo. Thus, for proving
that $\A$ is right quasi-duo, it is enough to show that
$I[x;\si]\subseteq J(\A)$. To this end, observe that for any
monomial $m\in I$ we have $m  \A m=0$ in $ \A$. This implies
$m\A\subseteq J(\A)$, for all monomials $m\in I$ and hence
$I\subseteq J(\A)$ follows.

Finally the ring $R[x;\si^{-1}]$ is not right quasi-duo as
otherwise, by Theorem \ref{main skew poly}, $N$ would be an ideal
of $R$ and $R/N$ would be commutative, where $N=N(R)$ is taken
with respect to $\si^{-1}$. However $x_1x_0^2\ldots
x_{-n+1}^2x_{-n}\ne 0$, for any $n\geq 1$, i.e. $x_1x_0\not\in N$
and clearly $0=x_0x_1\in N$.
\end{example}
Corollary \ref{thm Laurent} shows that for the ring $R$ from the
above example the ring $\C$ is also not right quasi-duo.

 Note that   in Examples \ref{not commutative} and \ref{ex. localization}
 the ring  $R$
contains maximal ideals with infinite orbits under the action of
$\si$. We close the article with
  a result showing that  this is one of the reasons for which
$R[x,x^{-1};\si]$ is not quasi-duo.

\begin{theorem}\label{orbits} Let $\si$ be an automorphism of $R$.
For any maximal ideal $M$  of $R$ we have:
\begin{enumerate}
\item Suppose the ring $\A$ is
right quasi-duo. Then
 either $\si(M)=M$ or $M$ has infinite orbit under the
action of $\si$. Moreover, if
    $\si(M)=M$, then  the ring $R/M$ is a field and $\si$ induces identity on $R/M$.

\item Suppose the ring $\C$ is right quasi-duo. Then $\si(M)=M$, the ring $R/M$
 is a field and $\si$ induces identity on $R/M$.
\end{enumerate}
\end{theorem}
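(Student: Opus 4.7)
The plan is to reduce both parts to the case $\sigma(M)=M$, where the quotient $R/M$ inherits a $\sigma$-action, and then to finish by applying Theorem \ref{R domain skew poly} to the simple ring $R/M$. The two parts differ in the mechanism that forces $\sigma$-invariance: part (1) calls for ruling out nontrivial finite orbits by a Chinese remainder theorem argument, whereas part (2) admits a much slicker conjugation argument exploiting the invertibility of $x$ in $\C$. Part (2) will then inherit the remaining conclusions from part (1) via Corollary \ref{thm Laurent}.

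For part (1), once $\sigma(M)=M$ is known, $(R/M)[x;\sigma]$ is right quasi-duo as a homomorphic image of $\A$, so Corollary \ref{Laurent poly}(3) makes $R/M$ right quasi-duo; since $R/M$ is simple this forces it to be a division ring, and Theorem \ref{R domain skew poly} yields that $R/M$ is commutative (hence a field) with $\sigma$ acting as the identity. To rule out a finite $\sigma$-orbit of length $n\geq 2$, set $I=\bigcap_{i=0}^{n-1}\sigma^i(M)$; then $I$ is $\sigma$-stable and, since the $\sigma^i(M)$ are pairwise comaximal as distinct maximal ideals, the two-sided Chinese remainder theorem gives $R/I\cong\prod_{i=0}^{n-1}R/\sigma^i(M)$. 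The orthogonal idempotents $e_0,\ldots,e_{n-1}$ of this product are cyclically permuted by $\sigma$ (because $\sigma(\sigma^i(M))=\sigma^{i+1}(M)$), so $e_i\sigma(e_i)=e_ie_{i+1}=0$ and each $e_i$ lies in $N(R/I)$. Since $(R/I)[x;\sigma]$ is right quasi-duo, Theorem \ref{main skew poly}(4)(b) tells us $N(R/I)$ is an ideal, whence $1=\sum e_i\in N(R/I)$ and $N(R/I)=R/I$. The description of $J((R/I)[x;\sigma])$ in Theorem \ref{main skew poly}(4)(a) then places $x$ inside $J((R/I)[x;\sigma])$, which contradicts the fact that $1-x$ has no right inverse in a nonzero skew polynomial ring (a direct coefficient comparison forces the leading coefficient of any hypothetical inverse to be simultaneously $1$ and $0$). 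Hence $n=1$ and $\sigma(M)=M$.

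For part (2), the key observation is that $M\C=M[x,x^{-1};\sigma]$ is a proper right ideal of $\C$: every element has all of its Laurent coefficients in $M$, so $1\notin M\C$. Enlarging to a maximal right ideal $\tilde M$ and invoking the right quasi-duo hypothesis, $\tilde M$ is two-sided; then $\tilde M\cap R$ is a proper two-sided ideal of $R$ containing the maximal $M$, so $\tilde M\cap R=M$. Because $x$ is invertible in $\C$ and $\tilde M$ is two-sided, $x\tilde Mx^{-1}=\tilde M$, and hence $\sigma(M)=xMx^{-1}\subseteq\tilde M\cap R=M$; maximality of $\sigma(M)$ upgrades this to $\sigma(M)=M$. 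Since $\C$ right quasi-duo implies $R[x;\sigma]$ right quasi-duo by Corollary \ref{thm Laurent}, part (1) then delivers the remaining statement that $R/M$ is a field and $\sigma$ induces $\mathrm{id}$ on $R/M$. The main obstacle is the finite-orbit exclusion in part (1), where one has to synthesize the CRT decomposition, the cyclic $\sigma$-action on the idempotents, and the Jacobson radical description from Theorem \ref{main skew poly}(4) into a single contradiction; the corresponding step in part (2) is considerably easier because invertibility of $x$ in $\C$ translates $\sigma$ into an inner automorphism.
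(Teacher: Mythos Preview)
Your argument is correct and follows essentially the same route as the paper: the Chinese remainder decomposition for part~(1), the conjugation-by-$x$ argument for part~(2), and Theorem~\ref{R domain skew poly} to finish once $\sigma(M)=M$. The paper invokes Proposition~\ref{radical} rather than Theorem~\ref{main skew poly}(4) to conclude that $N(R/I)$ is an ideal, but this is the same information. One small remark: your detour through $x\in J((R/I)[x;\sigma])$ and the non-invertibility of $1-x$ is unnecessary---once you have $1=\sum e_i\in N(R/I)$, the definition of $N$ immediately gives $1\cdot\sigma(1)\cdots\sigma^n(1)=1=0$ for some $n$, contradicting $I\subseteq M\subsetneq R$.
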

\begin{proof} Let $M$ be a maximal   ideal of $R$.  Notice that
$R/M$ is a division ring,  as $\A$ is right quasi-duo.

 (1)(a) Suppose that $M$ has finite orbit  $\{M,\si(M),\dots , \si^n(M)\}$
 under the action of $\si$ and let $I=M\cap\si(M)\cap\dots \cap
 \si^n(M)$. Then, $\si$ induces an automorphism  of $\bar{R}=R/I$,
 which is also denoted by $\si$. Notice that   $\bar{R}$ is isomorphic to the direct
 product of division rings $R/\si^k(M)$, $0\leq k\leq n$, and $\si$ induces its
 isomorphism permuting the components. It is clear that the set of
 $\si$-nilpotent elements of $\bar R$ forms an ideal if and only
 if $\si (M)=M$, so the first part of (1)  is a consequence of Proposition \ref{radical}.

   Suppose now that $\si(M)=M$. Then $\si$ induces an automorphism of the
 division ring  $R/M$ and $R/M[x;\si]$ is right quasi-duo. Now the
 remaining part of (1) is a consequence of Theorem \ref{R domain skew poly}.

  (2)  Since $\C$ is
 right quasi-duo, there exists a maximal ideal of $\C$, say $W$,
 such that  $M\C\subseteq W$.
 Notice that maximality of $M$ implies that $W\cap R=M$.
 We also have   $M+xMx^{-1}=M+\si(M)$ and $M+\si^{-1}(M)=
 M+x^{-1}Mx$ are contained in $W\cap R=M$.
 This shows that $\si(M)=M$.

 By Corollary \ref{thm Laurent},  the ring $\A$ is also a right quasi-duo
   and with the help of the statement (1) one can easily
 complete the proof of the theorem.
\end{proof}

Concluding, in this article we got a complete and, as it seems,
quite satisfactory characterization of quasi-duo skew polynomial
rings of endomorphism type.  Our results show that it is hard  to
expect that such rings might be helpful in constructing right but
not left quasi-duo rings.  It seems that more appropriate for that
aim could be  skew polynomial rings $R[x;\tau,\de]$, where $\de$
denotes a skew $\tau$-derivation of $R$. The situation reminds a
bit the case of duo property of rings (recall that a ring $R$ is
called right (left) duo if every right (left) ideal of $R$ is
two-sided). It is known (Cf.\cite{Marks}) that $R[x;\tau]$ is
right duo iff it is left duo, whereas there exists
(Cf.\cite{Matczuk}) a skew polynomial ring $R[x;\tau,\delta]$ such
that it is right but not left duo. Note however that the examples
constructed in \cite{Matczuk} are left and right quasi-duo. In
that context it would be interesting to characterize right and
left quasi-duo property of skew polynomial rings $R[x;\tau,\de]$.

\end{document}